\newtheorem{definition}{Definition}[section]
\newtheorem{lemma}[definition]{Lemma}
\newtheorem{proposition}[definition]{Proposition}
\newtheorem{theorem}[definition]{Theorem}
\def\va{\varepsilon}
\def\ra{\rightarrow}
\def\a{\alpha}
\def\b{\beta}
\def\l{\lambda}
\def\r{\rho}
\def\cd{\cdot}
\def\mf{\mathfrak}
\def\mb{\mathbb}
\newcommand{\hide}[1]{}
\newcommand{\inv}{^{-1}} 
\newcommand{\C}{\mathcal{C}}
\newcommand{\yd}{{}_H^H\mathcal{YD}}
\newcommand{\ydc}{{}_H^H\mathcal{YD}(\C)}
\newcommand{\CC}{{}_H^H\C_H^H}
\newcommand{\gnotc}[1]{\gnot{\hspace{4mm}#1}}
\newcommand{\morph}{\xi}
\def\rawo\lonra{\longrightarrow}
\def\ot{\otimes}
\newenvironment{proof}{{\it Proof.}}{\hfill $ \square $ \vskip 4mm}
\begin{document}
\title{Structure theorems for bicomodule algebras over quasi-Hopf algebras, weak Hopf algebras 
and braided Hopf algebras
\thanks
{Research partially supported by FWO-Vlaanderen (Flemish Fund for Scientific Research) 
within the research project ''Equivariant Brauer groups and Galois deformations''.  
The first author (J. D.) was supported as an aspirant of FWO-Vlaanderen. 
The second author (F. P.) was also partially supported by 
a grant of the Romanian National 
Authority for Scientific Research, CNCS-UEFISCDI, 
project number PN-II-ID-PCE-2011-3-0635,  
contract nr. 253/5.10.2011.}}
\author{Jeroen Dello\\
Department of Mathematics and Statistics, University of Hasselt\\
Agoralaan 1, Diepenbeek 3590, Belgium\\
e-mail: jeroen.dello@uhasselt.be
\and
Florin Panaite\\
Institute of Mathematics of the 
Romanian Academy\\ 
PO-Box 1-764, RO-014700 Bucharest, Romania\\
e-mail: florin.panaite@imar.ro
\and 
Freddy Van Oystaeyen\\
Department of Mathematics and Computer Science, University of Antwerp\\
Middelheimlaan 1, Antwerp 2020, Belgium\\
e-mail: fred.vanoystaeyen@ua.ac.be
\and
Yinhuo Zhang \\
Department of Mathematics and Statistics, University of Hasselt\\
Agoralaan 1, Diepenbeek 3590, Belgium\\
e-mail: yinhuo.zhang@uhasselt.be
}
\date{}
\maketitle

\begin{abstract}
Let $H$ be a quasi-Hopf algebra, a weak Hopf algebra 
or a braided Hopf algebra. Let $B$ be an $H$-bicomodule algebra 
such that there exists a morphism of $H$-bicomodule 
algebras $v:H\rightarrow B$. Then we can define an object $B^{co(H)}$ which is a left-left 
Yetter-Drinfeld module over $H$, having extra properties that allow to make a smash product 
$B^{co(H)}\# H$ which is an $H$-bicomodule algebra, isomorphic to $B$.
\end{abstract}
\section*{Introduction}
${\;\;\;}$The need for a structure theorem for bicomodule algebras (over a classical Hopf algebra $H$) 
finds its  
origin in the doctoral thesis of the first author. That is to say, in \cite{Dello} the author constructed a 
morphism $\morph : BiGal(\yd;B) \rightarrow BiGal(B \rtimes H)$ from the group of braided bi-Galois objects 
over a braided Hopf algebra $B$ in the category $\yd$ (of left-left Yetter-Drinfeld modules), to the group 
of bi-Galois objects over the Radford biproduct $B \rtimes H$. This morphism $\morph$ sends an 
isomorphism class $[A]$ to the class $[A\# H]$. Using the structure theorem for bicomodule algebras, 
one is able to give a description for the image of this morphism $\morph$.

To be more specific, (the isomorphism class of) a $B \rtimes H$-bi-Galois object $D$ is an element of 
$Im\morph$ if there exists a $B \rtimes H$-bicolinear algebra morphism $v : H \rightarrow D$. Indeed, 
under this assumption $v : H \rightarrow D$ is also an $H$-bicolinear algebra morphism. Hence, 
by the structure theorem, we get a Yetter-Drinfeld module algebra $D_0 = D^{coH}$ such that 
$D \cong D_0 \# H$ as $H$-bicomodule algebras. Then it is shown in \cite{Dello} that $D_0$ 
has the structure of a braided $B$-bi-Galois object in $\yd$ and that $D \cong D_0 \# H$ as 
$B \rtimes H$-bicomodule algebras, thus realizing $\morph([D_0])=[D]$.

It appears natural to try to see whether the structure theorem for bicomodule algebras remains valid 
over more general Hopf algebra-type objects. It is the aim of this paper to prove that this happens 
if we replace the Hopf algebra $H$ by a quasi-Hopf algebra, a weak Hopf algebra 
or a braided Hopf algebra. In each case, the result is that if $B$ is an $H$-bicomodule algebra 
(in an appropriate sense in each case) such that there exists a morphism of $H$-bicomodule 
algebras $v:H\rightarrow B$, then we can define an object $B^{co(H)}$ which is a left-left 
Yetter-Drinfeld module over $H$, having extra properties that allow to make a smash product 
$B^{co(H)}\# H$ which is an $H$-bicomodule algebra, isomorphic to $B$. In all three cases, 
the proof relies on an analogue of Schauenburg's theorem that the categories of two-sided two-cosided 
Hopf modules and Yetter-Drinfeld modules are equivalent, cf. \cite{schauen}. 

The paper is organized as follows: it contains three sections, each one with its own preliminaries, 
each one containing the proof of the structure theorem for each of the three Hopf algebra-type 
objects mentioned above. 

In the first two sections, 
we work over a base field $k$. All algebras, linear spaces 
etc. are over $k$; unadorned $\ot $ means $\ot_k$. A multiplication $\mu :A\ot A\rightarrow  A$ on a 
linear space $A$ is denoted by juxtaposition: $\mu (a\ot a')=aa'$. For a comultiplication 
$\Delta :C\rightarrow C\ot C$ on a linear space $C$, we use the version of Sweedler's sigma notation: 
$\Delta (c)=c_1\ot  
c_2$, for $c\in C$. \\[2mm]  
{\bf Acknowledgements:} We would like to thank Bojana Femi\'c for providing us with the 
package (GrCalc3) to draw braided diagrams, which was developed by Bodo Pareigis. 
\section{Quasi-Hopf bicomodule algebras}\label{sec1}
\setcounter{equation}{0}
${\;\;\;}$Following Drinfeld \cite{d}, a quasi-bialgebra is a fourtuple $(H, \Delta ,
\va , \Phi )$, where $H$ is an associative algebra with unit $1$,  
$\Phi$ is an invertible element in $H\ot H\ot H$, and $\Delta :\
H\ra H\ot H$ and $\va :\ H\ra k$ are algebra homomorphisms
satisfying the identities (for all $h\in H$): 
\begin{eqnarray}
&&(id \ot \Delta )(\Delta (h))=%
\Phi (\Delta \ot id)(\Delta (h))\Phi ^{-1},\label{q1}\\
&&(id \ot \va )(\Delta (h))=h\ot 1, %
\mbox{${\;\;\;}$}%
(\va \ot id)(\Delta (h))=1\ot h,\label{q2} \\
&&(1\ot \Phi)(id\ot \Delta \ot id) (\Phi)(\Phi \ot 1)= (id\ot id
\ot \Delta )(\Phi ) (\Delta \ot id \ot id)(\Phi
),\label{q3}\\
&&(\va \ot id\ot id)(\Phi )=(id \ot \va \ot id )(\Phi )= 
(id \ot id\ot \va )(\Phi )=1\ot 1\ot 1.\label{q4}
\end{eqnarray}
The map $\Delta $ is called the coproduct or the
comultiplication, $\va $ the counit and $\Phi $ the associator.
We denote the tensor components of $\Phi$ 
by capital letters and those of $\Phi^{-1}$ by small letters: 
\begin{eqnarray*}
&&\Phi=X^1\ot X^2\ot X^3=T^1\ot T^2\ot T^3=Y^1\ot  
Y^2\ot Y^3=\cdots\\%
&&\Phi^{-1}=x^1\ot x^2\ot x^3=
t^1\ot t^2\ot t^3=y^1\ot y^2\ot y^3=\cdots
\end{eqnarray*}

The quasi-bialgebra $H$ is called a quasi-Hopf algebra if there exists an 
anti-automorphism $S$ of the algebra $H$ and elements $\a , \b \in
H$ such that, for all $h\in H$, we have:
\begin{eqnarray}
&&S(h_1)\a h_2=\va (h)\a \mbox{${\;\;\;}$ and ${\;\;\;}$}
h_1\b S(h_2)=\va (h)\b ,\label{q5}\\
&&X^1\b S(X^2)\a X^3=1 %
\mbox{${\;\;\;}$ and${\;\;\;}$}%
S(x^1)\a x^2\b S(x^3)=1.\label{q6}
\end{eqnarray}
The axioms for a quasi-Hopf algebra imply that $\va (\a )\va (\b 
)=1$, so, by rescaling $\a $ and $\b $, we may assume without loss
of generality that $\va (\a )=\va (\b )=1$ and $\va \circ S=\va $.

Suppose that $(H, \Delta , \varepsilon , \Phi )$ is a
quasi-bialgebra. If $U,V,W$ are left (right) $H$-modules, define
$a_{U,V,W}, {\bf a}_{U, V, W} :(U\otimes V)\otimes W\rightarrow
U\otimes (V\otimes W)$
by $a_{U,V,W}((u\otimes v)\otimes w)=\Phi \cdot (u\otimes
(v\otimes w))$ and 
${\bf a}_{U, V, W}((u\ot v)\ot w)= (u\ot (v\ot w))\cd \Phi ^{-1}$. 
The category $_H{\cal M}$ (${\cal M}_H$) of 
left (right) $H$-modules becomes a monoidal category (see
\cite{k}, \cite{m} for terminology) with tensor product 
$\otimes $ given via $\Delta $, associativity constraints
$a_{U,V,W}$ (${\bf a}_{U, V, W}$), unit $k$ as a trivial
$H$-module and the usual left and right
unit constraints.

Let again $H$ be a quasi-bialgebra. We say that a $k$-vector space 
$A$ is a left $H$-module algebra if it is an algebra in the
monoidal category $_H{\cal M}$, that is $A$ has a multiplication
and a usual unit $1_A$ satisfying the 
following conditions: 
\begin{eqnarray}
&&(aa')a''=(X^1\cd a)[(X^2\cd a')(X^3\cd
a'')],\label{ma1}\\
&&h\cd (a a')=(h_1\cd a)(h_2\cd a'), \;\;\;
h\cd 1_A=\va (h)1_A,
\end{eqnarray}
for all $a, a', a''\in A$ and $h\in H$, where $h\ot a\ra
h\cd a$ is the left $H$-module structure of $A$. Following
\cite{bpv} we define the smash product $A\# H$ as follows: as
vector space $A\# H$ is $A\ot H$ (elements $a\ot h$ will be
written $a\# h$) with multiplication
given by 
$(a\# h)(a'\# h')=
(x^1\cd a)(x^2h_1\cd a')\# x^3h_2h'$. 
The smash product $A\# H$ is an 
associative algebra with unit $1_A\# 1_H$.

Recall from \cite{hn1} the notions of (bi)comodule algebra over a
quasi-bialgebra.
\begin{definition}
Let $H$ be a quasi-bialgebra. A unital associative algebra
$\mathfrak{A}$ is called a right $H$-comodule algebra if there
exist an algebra morphism $\r :\mathfrak{A}\ra \mathfrak{A}\ot H$
and an invertible element $\Phi _{\r }\in \mathfrak{A}\ot H\ot H$
such that:
\begin{eqnarray}
&&\Phi _{\r }(\r \ot id)(\r (\mf {a}))=(id\ot \Delta
)(\r (\mf {a}))\Phi _{\r }, 
\mbox{${\;\;\;}$$\forall $ $\mf {a}\in
\mathfrak{A}$,}\label{rca1}\\[1mm]%
&&(1_{\mf {A}}\ot \Phi)(id\ot \Delta \ot id)(\Phi _{\r })(\Phi
_{\r }\ot 1_H)= (id\ot id\ot \Delta )(\Phi _{\r })(\r \ot id\ot
id)(\Phi _{\r }),\label{rca2}\\[1mm]%
&&(id\ot \va)\circ \r =id ,\label{rca3}\\[1mm]%
&&(id\ot \va \ot id)(\Phi _{\r })=(id\ot id\ot \va )(\Phi _{\r }
)=1_{\mathfrak{A}}\ot 1_H.\label{rca4}
\end{eqnarray}
Similarly, a unital associative algebra $\mathfrak{B}$ is called
a left $H$-comodule algebra if there exist an algebra morphism $\l
: \mf {B}\ra H\ot \mathfrak{B}$ and an invertible element $\Phi
_{\l }\in H\ot H\ot \mathfrak{B}$ such that:
\begin{eqnarray}
&&(id\ot \l )(\l (\mf {b}))\Phi _{\l }=\Phi _{\l
}(\Delta \ot id)(\l (\mf {b})),
\mbox{${\;\;\;}$$\forall $ $\mf {b}\in \mathfrak{B}$,}
\label{lca1}\\[1mm]%
&&(1_H\ot \Phi _{\l })(id\ot \Delta \ot id)(\Phi _{\l })(\Phi \ot
1_{\mf {B}})= (id\ot id\ot \l )(\Phi _{\l })(\Delta \ot id\ot
id)(\Phi _{\l }),\label{lca2}\\[1mm]%
&&(\va \ot id)\circ \l =id ,\label{lca3}\\[1mm]%
&&(id\ot \va \ot id)(\Phi _{\l })=(\va \ot id\ot id)(\Phi _{\l }
)=1_H\ot 1_{\mathfrak{B}}.\label{lca4}
\end{eqnarray}
Finally, by an $H$-bicomodule algebra $\mb {A}$ 
we mean a quintuple $(\l, \r , \Phi _{\l }, \Phi _{\r }, \Phi
_{\l , \r })$, where $\l $ and $\r $ are left and right
$H$-coactions on $\mb {A}$, respectively, and where $\Phi _{\l
}\in H\ot H\ot \mb {A}$, $\Phi _{\r }\in \mb {A}\ot H\ot H$ and
$\Phi _{\l , \r }\in H\ot \mb {A}\ot H$ are invertible elements,
such that 
$(\mb {A}, \l , \Phi _{\l })$ is a left $H$-comodule algebra, 
$(\mb {A}, \r , \Phi _{\r })$ is a right $H$-comodule algebra and 
the following compatibility relations hold:
\begin{eqnarray}
&&\hspace{-1.8cm}\Phi _{\l , \r }(\l \ot id)(\r (u))=(id\ot \r )(\l 
(u))\Phi _{\l, \r }, \mbox{${\;\;}$$\forall $ $u\in \mb  
{A}$,}\label{bca1}\\[1mm]%
&&\hspace{-1.8cm}(1_H\ot \Phi _{\l , \r })(id\ot \l \ot id)(\Phi
_{\l , \r }) (\Phi _{\l }\ot 1_H)=(id\ot id\ot \r )(\Phi _{\l
})(\Delta \ot
id\ot id)(\Phi _{\l , \r }), \label{bca2}\\[1mm]%
&&\hspace{-1.8cm}(1_H\ot \Phi _{\r })(id\ot \r \ot id)(\Phi _{\l ,\r
})(\Phi _{\l , \r }\ot 1_H)= (id\ot id\ot \Delta )(\Phi _{\l , \r
})(\l \ot id\ot id) (\Phi _{\r }).\label{bca3}
\end{eqnarray}
\end{definition}

As pointed out in \cite{hn1}, if $\mb {A}$ is a bicomodule algebra
then, in addition, we have:
\begin{eqnarray}\label{bca4}
(id_H\ot id_{\mb {A}}\ot \va )(\Phi _{\l , \r })=1_H\ot 1_{\mb
{A}}, \mbox{${\;\;}$} (\va \ot id_{\mb {A}}\ot id_H)(\Phi _{\l ,
\r })= 1_{\mb {A}} \ot 1_H.
\end{eqnarray}

An example of a bicomodule algebra is $\mb {A}=H$, $\l =\r  
=\Delta $ and $\Phi _{\l }=\Phi _{\r }= \Phi _{\l , \r }=\Phi $.

If $(B, \lambda , \rho , \Phi _{\l }, \Phi _{\r }, \Phi _{\l , \r })$  and 
 $(B', \lambda ', \rho ', \Phi _{\l '}, \Phi _{\r '}, \Phi _{\l ', \r '})$ 
are $H$-bicomodule algebras, a morphism of $H$-bicomodule algebras 
$f:B\rightarrow B'$ is an algebra map such that $\rho '\circ f=(f\otimes id_H)\circ \rho $, 
$\lambda '\circ f=(id_H\otimes f)\circ \lambda $, $\Phi _{\rho '}=(f\ot id_H\ot id_H)(\Phi _{\rho })$, 
$\Phi _{\lambda '}=(id_H\ot id_H\ot f)(\Phi _{\lambda })$ and 
$\Phi _{\l ', \r '}=(id_H\otimes f\otimes id_H)(\Phi _{\l , \r })$. 

Let us denote by $_H{\cal M}_H$ the category of $H$-bimodules; it is 
also a monoidal category, the 
associativity constraints being given by ${\bf a^{'}}_{U, V, W}:
(U\ot V)\ot W\ra U\ot (V\ot W)$,  
${\bf a^{'}}_{U, V, W}((u\ot v)\ot w)= \Phi \cd (u\ot (v\ot w))\cd
\Phi ^{-1}$,  
for $U, V, W\in {}_H{\cal M}_H$ and $u\in U$, $v\in V$, $w\in 
W$. Therefore, we can define coalgebras in the category of
$H$-bimodules; in particular, the axioms for $H$ ensure that $H$ is a (coassociative) 
coalgebra in $_H{\cal M}_H$.

We recall from \cite{majid} the definition of (left) Yetter-Drinfeld modules 
over a quasi-bialgebra $H$. 
\begin{definition}
A $k$-linear space $M$ is called a left Yetter-Drinfeld module over $H$ if 
$M$ is a left $H$-module (with action denoted by $h\ot m\mapsto h\cdot m$) 
and $H$ coacts on $M$ to the left (the coaction is denoted by 
$\lambda _M:M\rightarrow H\ot M$, $\lambda _M(m)=m^{(-1)}\ot m^{(0)}$) such 
that:
\begin{eqnarray}
&&X^1m^{(-1)}\ot (X^2\cdot m^{(0)})^{(-1)}X^3\ot (X^2\cdot m^{(0)})^{(0)}
\nonumber \\
&&\;\;\;\;\;\;\;\;\;
=X^1((Y^1\cdot m)^{(-1)})_1Y^2\ot X^2((Y^1\cdot m)^{(-1)})_2Y^3\ot  
X^3\cdot (Y^1\cdot m)^{(0)}, \label{yd1} \\
&&\varepsilon (m^{(-1)})m^{(0)}=m, \label{yd2} \\
&&h_1m^{(-1)}\ot h_2\cdot m^{(0)}=(h_1\cdot m)^{(-1)}h_2\ot 
(h_1\cdot m)^{(0)}, \label{yd3}
\end{eqnarray}
for all $m\in M$ and $h\in H$. 
The category $_H^H{\cal YD}$ consists of such objects, the morphisms in the 
category being the $H$-linear maps intertwining the $H$-coactions. 
\end{definition}

The category $_H^H{\cal YD}$ is (pre) braided monoidal; explicitly, 
if $(M, \lambda _M)$ and $(N, \lambda _N)$ are objects in $_H^H{\cal YD}$, 
then $(M\ot N, \lambda _{M\ot N})$ is also object in 
$_H^H{\cal YD}$, where $M\ot N$ is a left $H$-module with action $h\cdot 
(m\ot n)=h_1\cdot m\ot h_2\cdot n$, and the coaction $\lambda _{M\ot N}$ is 
given by 
\begin{eqnarray*}
&&\lambda _{M\ot N}(m\ot n)=X^1(x^1Y^1\cdot m)^{(-1)}x^2(Y^2\cdot n)^{(-1)}
Y^3\ot X^2\cdot (x^1Y^1\cdot m)^{(0)}\\ 
&&\;\;\;\;\;\;\;\;\;\;\;\;\;\;\;\;\;\;\;\;\;\;\;\;\;\;\;\;\;\;\;\;
\ot X^3x^3\cdot (Y^2\cdot n)^{(0)}.
\end{eqnarray*}
The associativity constraints are the same as in $_H{\cal M}$, and the 
(pre) braiding is given by 
\begin{eqnarray*}
&&c_{M, N}:M\ot N\rightarrow N\ot M, \;\;c_{M, N}(m\ot n)=m^{(-1)}\cdot n\ot 
m^{(0)}.
\end{eqnarray*}

Since $_H^H{\cal YD}$ is a monoidal category, we can speak about algebras 
in $_H^H{\cal YD}$. Namely, if $A$ is an object in $_H^H{\cal YD}$, then 
$A$ is an algebra in $_H^H{\cal YD}$ if and only if 
$A$ is a left $H$-module algebra and 
$A$ is a left quasi-comodule algebra, that is its unit and 
multiplication intertwine the $H$-coaction $\lambda _A$, namely 
(for all $a, a'\in A$):  
\begin{eqnarray}
&&\lambda _A(1_A)=1_H\ot 1_A, \label{unitate} \\
&&\lambda _A(aa')=X^1(x^1Y^1\cdot a)^{(-1)}x^2(Y^2\cdot a')^{(-1)}Y^3 
\nonumber \\
&&\;\;\;\;\;\;\;\;\;\;\;\;\;\;\;\;\;\;\;\;
\ot [X^2\cdot (x^1Y^1\cdot a)^{(0)}][X^3x^3\cdot (Y^2\cdot a')^{(0)}].
\label{multi}
\end{eqnarray}

We recall the following result from \cite{ap}:
\begin{proposition}\label{YD}
Let $H$ be a quasi-bialgebra and $A$ an algebra in $_H^H{\cal YD}$, with coaction denoted by 
$A\rightarrow H\ot A$, $a\mapsto a^{(-1)}\ot a^{(0)}$. 
Then 
$(A\# H, \lambda , \rho , \Phi _{\lambda }, \Phi _{\rho }, 
\Phi _{\lambda , \rho })$ is an $H$-bicomodule algebra, 
with structures:
\begin{eqnarray*}
&&\lambda :A\# H\rightarrow H\ot (A\# H), \;\;\;\;
\lambda (a\# h)=T^1(t^1\cdot a)^{(-1)}t^2h_1\ot (T^2\cdot 
(t^1\cdot a)^{(0)}\# T^3t^3h_2),\\
&&\rho :A\# H\rightarrow (A\# H)\otimes H, \;\;\;\;
\rho (a\# h)=(x^1\cdot a\# x^2h_1)\otimes x^3h_2, \\
&&\Phi _{\lambda }=X^1\ot X^2\ot (1_A\# X^3)\in H\ot H\ot (A\# H), \\
&&\Phi _{\rho}=(1_A\# X^1)\otimes X^2\otimes X^3\in (A\# H)\otimes H
\otimes H, \\
&&\Phi _{\lambda , \rho }=X^1\ot (1_A\# X^2)\ot X^3\in H\ot (A\# H)\ot H.
\end{eqnarray*}
\end{proposition}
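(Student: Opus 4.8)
The plan is to check directly that the quintuple $(\l,\r,\Phi_\l,\Phi_\r,\Phi_{\l,\r})$ satisfies every clause in the definition of an $H$-bicomodule algebra, exploiting the fact that the whole structure is a Yetter--Drinfeld deformation of the canonical structure making $H$ a bicomodule algebra over itself (the case $A=k$, where $A\#H\cong H$ and all three associators reduce to $\Phi$). First I would dispose of the easy bookkeeping. The map $h\mapsto 1_A\#h$ is a unital algebra embedding $H\ra A\#H$ (using $h\cd 1_A=\va(h)1_A$ and \eqref{q4} on $\Phi^{-1}$), so $\Phi_\l,\Phi_\r,\Phi_{\l,\r}$ are invertible because $\Phi$ is, with inverses the analogous expressions built from $\Phi^{-1}$. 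The counit normalizations \eqref{rca4}, \eqref{lca4} and the identities \eqref{bca4} are then immediate from \eqref{q4}, while the counit axioms \eqref{rca3} and \eqref{lca3} follow from \eqref{yd2} together with $h\cd 1_A=\va(h)1_A$ and the counit of $\Phi$.

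Next I would prove that $\r$ and $\l$ are unital algebra maps. Unitality in both cases reduces to $h\cd 1_A=\va(h)1_A$, \eqref{unitate} and \eqref{q4} applied to $\Phi^{-1}$. Multiplicativity of $\r$ is the cleaner of the two: since $A$ enters $\r$ only through the module action, the verification reduces, after expanding the smash product $(a\#h)(a'\#h')=(x^1\cd a)(x^2h_1\cd a')\#x^3h_2h'$, to the fact that $\Delta$ is multiplicative on $H$ together with the reassociation axiom \eqref{ma1}. Multiplicativity of $\l$ is the first genuinely substantial step: here the coaction carries the Yetter--Drinfeld coaction $\l_A$ of $A$, so one must feed in the multiplicativity formula \eqref{multi} for $\l_A(aa')$ and reconcile the nested $\Phi,\Phi^{-1}$-strings it produces with those coming from $\l$ and from the smash product, the quasi-bialgebra pentagon \eqref{q3} being the key to closing the computation.

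With $\l$ and $\r$ established as algebra maps, I would verify the coassociativity constraints \eqref{lca1}, \eqref{rca1} and the pentagons \eqref{lca2}, \eqref{rca2}. The right-hand identities \eqref{rca1}, \eqref{rca2} are modelled verbatim on the coassociativity of $\Delta$ and on \eqref{q3}, so they reduce to those plus the module-algebra axioms. The left-hand coassociativity \eqref{lca1} is where the Yetter--Drinfeld compatibility \eqref{yd1} must be used in full: the two sides of \eqref{lca1} differ precisely by the reshuffling of $\l_A$ governed by \eqref{yd1}, and matching them requires a careful combination of \eqref{yd1} with \eqref{q3}. Finally I would check the three mixed compatibilities \eqref{bca1}--\eqref{bca3}. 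Relation \eqref{bca1} is the most informative, expressing the commutation of the left and right coactions, and it should follow from the axiom \eqref{yd3} relating the $H$-action and the $H$-coaction on $A$; relations \eqref{bca2} and \eqref{bca3} are again mixed pentagons that reduce to \eqref{q3}.

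I expect the main obstacle to be the two places where $\l_A$ genuinely interacts with the associator --- namely multiplicativity of $\l$ (via \eqref{multi}) and left coassociativity \eqref{lca1} (via \eqref{yd1}). In both the difficulty is organizational rather than conceptual: the formulas for $\l$ and for $\l_A(aa')$ each contain several copies of $\Phi$ and $\Phi^{-1}$ interleaved with the action of $H$ on $A$, and one must transport these $\Phi$-strings past the coaction using \eqref{q1}, \eqref{q3} and \eqref{yd1} until the two sides coincide. Keeping the $\Phi$-bookkeeping straight --- equivalently, drawing the corresponding associativity/braiding diagrams --- is the crux; everything else degenerates to the case $A=k$, where the statement is merely that $(H,\Delta,\Delta,\Phi,\Phi,\Phi)$ is a bicomodule algebra.
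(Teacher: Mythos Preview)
The paper does not prove this proposition: it is explicitly introduced with ``We recall the following result from \cite{ap}'' and no argument is given. So there is nothing to compare your proposal against in this paper; the proof you would be reproducing is the one in \cite{ap}.

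That said, your outline is the natural direct verification and is essentially sound. You have correctly located the two non-routine points: multiplicativity of $\lambda$ hinges on the comodule-algebra formula \eqref{multi} for $\lambda_A(aa')$, and the left coassociativity \eqref{lca1} hinges on the Yetter--Drinfeld axiom \eqref{yd1}; the mixed axiom \eqref{bca1} does indeed come from \eqref{yd3}. The remaining checks (right comodule-algebra axioms, the pentagons \eqref{rca2}, \eqref{lca2}, \eqref{bca2}, \eqref{bca3}, and all counit normalizations) reduce, as you say, to \eqref{q1}--\eqref{q4} and the module-algebra axioms, exactly because the three associators are just images of $\Phi$ under $h\mapsto 1_A\# h$. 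One small addition: the right comodule-algebra structure $(\rho,\Phi_\rho)$ on $A\# H$ is already known independently of the Yetter--Drinfeld data (it only uses that $A$ is a left $H$-module algebra), so you can quote that part rather than redo it; this is the content of the smash-product/right-comodule-algebra construction used elsewhere in the paper (cf.\ the reference to \cite{pvo}).
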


Moreover, one can easily see that in the hypotheses of Proposition \ref{YD}, the map 
$H\rightarrow A\# H$, $h\mapsto 1_A\# h$, is a morphism of $H$-bicomodule algebras.

We prove now a partial converse of Proposition \ref{YD}.
\begin{proposition}\label{converseYD}
Let $H$ be a quasi-bialgebra and $A$ an object in $_H^H{\cal YD}$, with action and coaction 
denoted by $H\ot A\rightarrow A$, $h\ot a \mapsto h\cdot a$ and $\lambda _A:A\rightarrow 
H\ot A$, $a\mapsto a^{(-1)}\ot a^{(0)}$. Assume that $A$ is also a left $H$-module 
algebra (with respect to the action $h\ot a\mapsto h\cdot a$). Assume that the map 
\begin{eqnarray*}
&&\lambda :A\# H\rightarrow H\ot (A\# H), \;\;\;\;
\lambda (a\# h)=T^1(t^1\cdot a)^{(-1)}t^2h_1\ot (T^2\cdot 
(t^1\cdot a)^{(0)}\# T^3t^3h_2)
\end{eqnarray*}
is an algebra map. Then $A$ is an algebra in $_H^H{\cal YD}$.
\end{proposition}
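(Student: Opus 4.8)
The plan is to invoke the characterization stated just before Proposition~\ref{YD}: since $A$ is by hypothesis both a left $H$-module algebra and a left Yetter-Drinfeld module, to prove that $A$ is an algebra in $\yd$ it suffices to check that $A$ is a left quasi-comodule algebra, i.e. that the unit axiom (\ref{unitate}) and the multiplicativity axiom (\ref{multi}) hold for $\lambda _A$. The only information available is that the explicit map $\lambda $ is unital and multiplicative, where the target $H\ot (A\# H)$ carries the tensor product algebra structure (componentwise multiplication, the second leg being the smash product). So the whole proof consists in extracting (\ref{unitate}) and (\ref{multi}) from these two properties of $\lambda $.

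First I would set up the bridge between $\lambda $ and $\lambda _A$. Writing $\pi :=id_H\ot id_A\ot \va _H:H\ot (A\# H)\ra H\ot A$, a direct computation on the defining formula for $\lambda $ --- using only the counit normalizations (\ref{q2}) and (\ref{q4}) for $\Phi $ and $\Phi ^{-1}$ (to collapse every $\Phi ^{\pm 1}$-leg hit by $\va $) together with $1_H\cd a=a$ --- yields the recovery formula
\begin{eqnarray*}
&&\pi (\lambda (a\# h))=a^{(-1)}h\ot a^{(0)},\;\;\;\forall \,a\in A,\,h\in H,
\end{eqnarray*}
so that in particular $\lambda _A(a)=\pi (\lambda (a\# 1_H))$. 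The unit axiom (\ref{unitate}) then drops out at once: applying $\pi $ to the identity $\lambda (1_A\# 1_H)=1_H\ot (1_A\# 1_H)$ (valid because $\lambda $ is a unital algebra map) and reading the left-hand side through the recovery formula gives $\lambda _A(1_A)=1_H\ot 1_A$.

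The real work is (\ref{multi}). Here I would apply $\pi $ to the multiplicativity identity
\begin{eqnarray*}
&&\lambda ((a\# 1_H)(a'\# 1_H))=\lambda (a\# 1_H)\lambda (a'\# 1_H).
\end{eqnarray*}
On the right I expand both factors by the formula for $\lambda $, multiply componentwise (smash product in the second leg), and then apply $\pi $; the counit kills the $H$-component of the smash product and, after repeatedly collapsing the $\va $-hit copies of $\Phi ^{\pm 1}$, this reorganizes into the right-hand side of (\ref{multi}). On the left I first compute $(a\# 1_H)(a'\# 1_H)=(x^1\cd a)(x^2\cd a')\# x^3$ from the smash-product multiplication and then use the recovery formula, obtaining $\lambda _A$ of the \emph{associator-twisted} product $(x^1\cd a)(x^2\cd a')$ shifted by $x^3$. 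Equating the two projected expressions is then the crux.

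I expect the main obstacle to be precisely this last reconciliation, and the reason is structural: the canonical inclusion $A\ra A\# H$, $a\mapsto a\# 1_H$, is \emph{not} an algebra map --- one has $(a\# 1_H)(a'\# 1_H)=(x^1\cd a)(x^2\cd a')\# x^3$ rather than $aa'\# 1_H$ --- so the multiplicativity of $\lambda $ does not transfer to $\lambda _A$ for free, and the genuine associator $\Phi $ must be untwisted by hand. Carrying this out needs the pentagon axiom (\ref{q3}) (or, in the comodule form that governs $\lambda $, (\ref{lca2})) to move the surviving $\Phi ^{\pm 1}$-legs past the coactions, together with the Yetter-Drinfeld compatibilities (\ref{yd1}) and (\ref{yd3}) and the quasi-associativity (\ref{ma1}) of the module algebra to rewrite $\lambda _A((x^1\cd a)(x^2\cd a'))$ in the form demanded by (\ref{multi}). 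The computation is entirely reversible --- it is the same string of coherence identities that underlies Proposition~\ref{YD} --- so the content is bookkeeping over several independent copies of $\Phi ^{\pm 1}$ rather than any new idea; the difficulty is doing it in the correct order so that each $\va $-collapse and each pentagon application is licensed.
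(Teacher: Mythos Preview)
Your overall strategy coincides with the paper's: recover $\lambda _A$ from $\lambda $ via the projection $\pi =id_H\ot id_A\ot \va _H$, with recovery formula $\pi (\lambda (a\# h))=a^{(-1)}h\ot a^{(0)}$, and read off (\ref{unitate}) and (\ref{multi}) from unitality and multiplicativity of $\lambda $. The unit axiom is handled exactly as you say.

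Where you diverge is in resolving the associator obstacle, and here your proposed fix is both more complicated than needed and not clearly workable as stated. You plan to apply multiplicativity to $(a\# 1)(a'\# 1)=(x^1\cd a)(x^2\cd a')\# x^3$, project, and then untwist the resulting expression $((x^1\cd a)(x^2\cd a'))^{(-1)}x^3\ot ((x^1\cd a)(x^2\cd a'))^{(0)}$ using the pentagon, the Yetter--Drinfeld compatibilities (\ref{yd1}), (\ref{yd3}), and the quasi-associativity (\ref{ma1}). But none of those identities tells you anything about $\lambda _A$ applied to a \emph{product of two elements of $A$}---that is exactly the content of (\ref{multi}), which you do not yet have---so it is not clear how this untwisting would go through, and the appeal to ``reversing Proposition~\ref{YD}'' is hand-waving.

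The paper sidesteps the whole issue with a one-line trick: rather than $(a\# 1)(a'\# 1)$, feed multiplicativity of $\lambda $ the pair $(Z^1\cd a\# 1)$ and $(Z^2\cd a'\# Z^3)$, where $Z^1\ot Z^2\ot Z^3=\Phi $. Then
\[
(Z^1\cd a\# 1)(Z^2\cd a'\# Z^3)=(z^1Z^1\cd a)(z^2Z^2\cd a')\# z^3Z^3=aa'\# 1
\]
by $\Phi ^{-1}\Phi =1$, so the left-hand side is $\lambda (aa'\# 1)$ and its projection is $\lambda _A(aa')$ on the nose. The projection of the right-hand side $\lambda (Z^1\cd a\# 1)\,\lambda (Z^2\cd a'\# Z^3)$ is then, after only the routine $\va $-collapses you describe, \emph{literally} the right-hand side of (\ref{multi}); no pentagon, no Yetter--Drinfeld axiom, no (\ref{ma1}) is invoked. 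Equivalently, you could rescue your own computation by substituting $a\mapsto Z^1\cd a$, $a'\mapsto Z^2\cd a'$ and right-multiplying the first tensor slot by $Z^3$ \emph{after} projecting---again only $\Phi ^{-1}\Phi =1$ is used. The difficulty you anticipate dissolves once the inputs to multiplicativity are pre-twisted by $\Phi $.
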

\begin{proof}
The fact that $\lambda $ is unital implies immediately that $\lambda _A$ is unital, 
so the only thing left to prove is the relation (\ref{multi}) for $A$. Let $a, a'\in A$. 
Since $\lambda $ is multiplicative, we have 
$$\lambda ((Z^1\cdot a\# 1)(Z^2\cdot a'\# Z^3))=\lambda (Z^1\cdot a\# 1)
\lambda (Z^2\cdot a'\# Z^3)$$
We compute the left and right hand sides of this equality:
\begin{eqnarray*}
\lambda ((Z^1\cdot a\# 1)(Z^2\cdot a'\# Z^3))
&=&\lambda ((z^1Z^1\cdot a)(z^2Z^2\cdot a'))\# z^3Z^3)\\
&=&\lambda (aa'\# 1)\\
&=&T^1(t^1\cdot aa')^{(-1)}t^2\ot (T^2\cdot 
(t^1\cdot aa')^{(0)}\# T^3t^3),
\end{eqnarray*}
\begin{eqnarray*}
\lambda (Z^1\cdot a\# 1)\lambda (Z^2\cdot a'\# Z^3)&=&
[T^1(t^1Z^1\cdot a)^{(-1)}t^2\ot (T^2\cdot 
(t^1Z^1\cdot a)^{(0)}\# T^3t^3)]\\
&&[Y^1(y^1Z^2\cdot a')^{(-1)}y^2Z^3_1\ot (Y^2\cdot 
(y^1Z^2\cdot a')^{(0)}\# Y^3y^3Z^3_2)]\\
&=&T^1(t^1Z^1\cdot a)^{(-1)}t^2Y^1(y^1Z^2\cdot a')^{(-1)}y^2Z^3_1\ot \\
&&[x^1T^2\cdot (t^1Z^1\cdot a)^{(0)}][x^2T^3_1t^3_1Y^2\cdot 
(y^1Z^2\cdot a')^{(0)}]\#x^3T^3_2t^3_2Y^3y^3Z^3_2. 
\end{eqnarray*}
Now we apply $\varepsilon $ on the last position in both terms. We obtain: 
\begin{eqnarray*}
(id \ot \varepsilon )(\lambda ((Z^1\cdot a\# 1)(Z^2\cdot a'\# Z^3)))&=&
(aa')^{(-1)}\ot (aa')^{(0)}=\lambda _A(aa'), \\
(id \ot \varepsilon )(\lambda (Z^1\cdot a\# 1)\lambda (Z^2\cdot a'\# Z^3))&=&
T^1(t^1Z^1\cdot a)^{(-1)}t^2(Z^2\cdot a')^{(-1)}Z^3\ot \\ 
&&\;\;\;\;\;\;\;\;\;\;\;\;\;\;\;\;\;\;\;\
 [T^2\cdot (t^1Z^1\cdot a)^{(0)}][T^3t^3\cdot (Z^2\cdot a')^{(0)}].
\end{eqnarray*}
The equality of these two terms is exactly the desired relation (\ref{multi}). 
\end{proof}

Let $H$ be a quasi-bialgebra and $M$ an $H$-bimodule together with two 
$H$-bimodule maps $\rho :M\rightarrow M\ot H$, $\lambda :M\rightarrow H\ot M$, 
with notation 
$\rho (m)=m_{(0)}\ot m_{(1)}$ and $\lambda (m)=m_{<-1>}\ot m_{<0>}$, 
for $m\in M$ (called respectively a right and a left 
$H$-coaction on $M$).  
Then $(M, \lambda, \rho )$ is called a two-sided two-cosided  quasi-Hopf $H$-bimodule if 
$M$ is an $H$-bicomodule in the monoidal category $_H{\cal M}_H$, that is if the 
following conditions are satisfied, for all $m\in M$:
\begin{eqnarray}
&&(id_M\ot \varepsilon )\circ \rho =id_M, \label{qb1}\\
&&\Phi \cdot (\rho \ot id_H)(\rho (m))=(id_M\ot \Delta )(\rho (m))\cdot 
\Phi ,  \label{qb2}\\
&&(\varepsilon \ot id_M)\circ \lambda =id_M, \label{qb3}\\
&&(id_H\ot \lambda )(\lambda (m))\cdot \Phi =
\Phi \cdot (\Delta \ot id_M)(\lambda (m)), \label{qb4}\\
&&\Phi \cdot (\lambda \ot id_H)(\rho (m))=
(id_H\ot \rho )(\lambda (m))\cdot \Phi . \label{qb5}
\end{eqnarray}
The category of two-sided two-cosided  quasi-Hopf $H$-bimodules will be denoted by 
$^H_H{\cal M}_H^H$ (the morphisms in the category are the $H$-bimodule maps 
intertwining the $H$-coactions), cf. \cite{sch}. 

Let now $H$ be a quasi-Hopf algebra and $M$ an object in $^H_H{\cal M}_H^H$, 
with notation as above. 
Then in particular $M$ is also an object in the category $_H{\cal M}_H^H$ of quasi-Hopf 
$H$-bimodules introduced in \cite{hn}. So, following \cite{hn}, we can define the map 
$E:M\rightarrow M$ by the formula
\begin{eqnarray}
E(m)=q^1\cdot m_{(0)}\cdot \beta S(q^2m_{(1)}), \;\;\;\;\;\forall \;\;m\in M,
\end{eqnarray}
where $q_R=q^1\ot q^2=X^1\ot S^{-1}(\alpha X^3)X^2$.  
Also, for $h\in H$ and $m\in M$, define 
\begin{eqnarray}
&&h\triangleright m=E(h\cdot m). \label{act}
\end{eqnarray}
Some properties of $E$ and $\triangleright $ are collected in \cite{hn}, 
Proposition 3.4, for instance (for $h, h'\in H$ and $m\in M$): 
$E^2=E$; $E(m\cdot h)=E(m)\varepsilon (h)$; $h\triangleright E(m)=
E(h\cdot m)\equiv h\triangleright m$; $(hh')\triangleright m=h\triangleright  
(h'\triangleright m)$; $h\cdot E(m)=(h_1\triangleright E(m))\cdot h_2$; 
$E(m_{(0)})\cdot m_{(1)}=m$; $E(E(m)_{(0)})\ot E(m)_{(1)}=E(m)\ot 1$. 
Because of these properties, the following notions of {\it coinvariants} all 
coincide: 
\begin{eqnarray*}
&&M^{co (H)}=E(M)=\{n\in M/E(n)=n\}=\{n\in M/E(n_{(0)})\ot n_{(1)}=E(n)
\ot 1\}.
\end{eqnarray*}
From the above properties it follows that $(M^{co(H)}, \triangleright )$ is 
a left $H$-module. 

In \cite{sch}, Schauenburg proved a structure theorem for objects in $^H_H{\cal M}_H^H$, 
that can be reformulated as follows:
\begin{theorem}(\cite{sch}) \label{struct4corners}
Let $H$ be a quasi-Hopf algebra. \\
(i) Let $V\in \;$$^H_H{\cal YD}$, with $H$-action denoted by $\triangleright $ and $H$-coaction 
denoted by $V\rightarrow H\ot V$, $v\mapsto v^{(-1)}\ot v^{(0)}$. Then $V\ot H$ becomes an 
object in $^H_H{\cal M}_H^H$ with structures: 
\begin{eqnarray*}
&&a\cdot (v\ot h)\cdot b=(a_1\triangleright v)\ot a_2hb, \\
&&\lambda _{V\ot H}(v\ot h)=X^1(x^1\triangleright v)^{(-1)}x^2h_1\ot 
(X^2\triangleright (x^1\triangleright v)^{(0)}\ot X^3x^3h_2), \\
&&\rho _{V\ot H}(v\ot h)=(x^1\triangleright v\ot x^2h_1)\ot x^3h_2, 
\end{eqnarray*}
for all $a, b, h\in H$ and $v\in V$.\\
(ii) Let $M\in \;$$^H_H{\cal M}_H^H$. Consider $V=M^{co(H)}$ as a left $H$-module 
with action $\triangleright $ as in (\ref{act}) and define the map 
$V\rightarrow H\ot V$, $v\mapsto v_{<-1>}\ot E(v_{<0>})$, 
where we denoted by $M\rightarrow H\ot M$, $m\mapsto m_{<-1>}\ot m_{<0>}$ the left 
$H$-coaction on $M$. Then with these structures $V$ is an object in $^H_H{\cal YD}$, and 
if we regard $V\ot H\in \;$$^H_H{\cal M}_H^H$ as in (i), the map 
$\nu :V\ot H\rightarrow M$, $\nu (v\ot h)=v\cdot h$ is an 
isomorphism in $^H_H{\cal M}_H^H$.
\end{theorem}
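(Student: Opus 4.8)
Since this is Schauenburg's structure theorem recast in the present notation, the plan is to verify both halves directly against the defining relations (\ref{qb1})--(\ref{qb5}) of $^H_H{\cal M}_H^H$ and the Yetter--Drinfeld axioms (\ref{yd1})--(\ref{yd3}). I would begin with the observation that the maps $\lambda_{V\ot H}$ and $\rho_{V\ot H}$ in (i) are formally the coactions $\lambda,\rho$ of Proposition~\ref{YD}, with the module algebra $A$ replaced by the Yetter--Drinfeld module $V$, and that $a\cd(v\ot h)\cd b=(a_1\triangleright v)\ot a_2hb$ is a genuine $H$-bimodule structure because $\triangleright$ is a strict action and $\Delta$ is multiplicative. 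Part (i) then reduces to checking the five conditions making $(V\ot H,\lambda_{V\ot H},\rho_{V\ot H})$ an $H$-bicomodule inside the monoidal category $_H{\cal M}_H$.

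For part (i) I would dispatch the counit relations (\ref{qb1}) and (\ref{qb3}) immediately from the counit axiom (\ref{yd2}) and the normalization (\ref{q4}) of $\Phi$. The two quasi-coassociativity constraints (\ref{qb2}) and (\ref{qb4}) follow from the quasi-coassociativity (\ref{q1}) of $\Delta$, the action axiom for $\triangleright$, and the pentagon (\ref{q3}) governing the monoidal structure of $_H{\cal M}_H$. That $\lambda_{V\ot H}$ and $\rho_{V\ot H}$ are $H$-bimodule maps uses the action-coaction compatibility (\ref{yd3}) of $V$. The one genuinely substantial relation is the cross-compatibility (\ref{qb5}) between the two coactions: the associator bookkeeping here forces the full Yetter--Drinfeld relation (\ref{yd1}) into play, and this is the computational heart of part (i).

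For part (ii) I would first check that $V=M^{co(H)}$, equipped with the action $\triangleright$ of (\ref{act}) and the coaction $v\mapsto v_{<-1>}\ot E(v_{<0>})$, is an object of $^H_H{\cal YD}$. The coaction lands in $H\ot V$ because $E$ is idempotent, and (\ref{yd2}) is immediate from $(\va\ot id)\circ\l=id_M$ together with $E(v)=v$ for $v\in V$. The remaining axioms (\ref{yd3}) and especially (\ref{yd1}) I would extract from the relations (\ref{qb2}), (\ref{qb4}) and (\ref{qb5}) satisfied by $M$, combined with the catalogue of identities for $E$ and $\triangleright$ recalled from \cite{hn} ($E^2=E$, $E(m\cd h)=E(m)\va(h)$, $h\cd E(m)=(h_1\triangleright E(m))\cd h_2$, and so on). Verifying (\ref{yd1}) is the deepest point of the whole argument, since it is where the left coaction of $M$ must be reconciled with the projection $E$.

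It remains to prove that $\nu(v\ot h)=v\cd h$ is an isomorphism in $^H_H{\cal M}_H^H$. It is a right $H$-module map trivially, and a left $H$-module map by the identity $h\cd E(m)=(h_1\triangleright E(m))\cd h_2$; the harder claim, which I expect to be the main obstacle alongside (\ref{yd1}), is that $\nu$ intertwines the coactions, as this again demands pushing the definitions of $E$ and $\triangleright$ through the coaction formulas and collapsing the result via the $E$-identities. For bijectivity I would exhibit the inverse $m\mapsto E(m_{(0)})\ot m_{(1)}$. One composite is the listed identity $E(m_{(0)})\cd m_{(1)}=m$. For the other, using that $\r$ is an $H$-bimodule map I get $(v\cd h)_{(0)}\ot(v\cd h)_{(1)}=v_{(0)}\cd h_1\ot v_{(1)}h_2$ for $v\in V$, so that $E((v\cd h)_{(0)})\ot(v\cd h)_{(1)}=E(v_{(0)})\va(h_1)\ot v_{(1)}h_2=E(v_{(0)})\ot v_{(1)}h$ by $E(m\cd h)=E(m)\va(h)$; finally $E(E(m)_{(0)})\ot E(m)_{(1)}=E(m)\ot 1$ applied to $v=E(v)$ gives $E(v_{(0)})\ot v_{(1)}=v\ot 1$, and the expression collapses to $v\ot h$, as required.
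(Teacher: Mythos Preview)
The paper does not actually prove this theorem: it is stated with the attribution ``(\cite{sch})'' and no proof environment follows; the authors simply quote Schauenburg's result and immediately proceed to use it. So there is nothing in the paper to compare your proposal against.

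That said, your sketch is a reasonable outline of the direct verification one would carry out, and the explicit inverse you write down for $\nu$, together with the computation that collapses $E((v\cdot h)_{(0)})\ot (v\cdot h)_{(1)}$ to $v\ot h$, is correct. Just be aware that the genuinely long computations you flag---the cross-compatibility (\ref{qb5}) in part~(i) and the Yetter--Drinfeld axiom (\ref{yd1}) for $V=M^{co(H)}$ in part~(ii)---are not trivial bookkeeping: in Schauenburg's original treatment they go through a somewhat different organization (via the monoidal equivalence between $^H_H{\cal M}_H^H$ and $^H_H{\cal YD}$ rather than a bare axiom check), and if you intend to present a self-contained proof you should either reproduce those calculations in full or cite \cite{sch} as the present paper does.
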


For the sequel of this section, we fix a quasi-Hopf algebra $H$ and an $H$-bicomodule 
algebra $B$, with structure maps $\lambda _B$, $\rho _B$ and associators 
$\Phi _{\l _B}$, $\Phi _{\r _B}$, $\Phi _{\l _B, \r _B}$, with notation 
$\rho _B(b)=b_{(0)}\ot 
b_{(1)}\in B\ot H$ and $\lambda _B(b)=b_{<-1>}\ot b_{<0>}\in H\ot B$,  
such that there exists $v:H\rightarrow B$ a morphism of 
$H$-bicomodule algebras (in particular, this implies $\rho _B(v(h))=
v(h_1)\ot h_2$, $\lambda _B(v(h))=h_1\ot v(h_2)$, 
for all $h\in H$, and  
$\Phi _{\rho _B}=v(X^1)\ot X^2\ot X^3$, $\Phi _{\lambda _B}=X^1\ot X^2\ot v(X^3)$ 
and $\Phi _{\l _B, \r _B}=X^1\ot v(X^2)\ot X^3$). 
\begin{lemma}\label{4colturi}
$(B, \lambda _B, \rho _B)$ becomes an object in $^H_H{\cal M}_H^H$. 
\end{lemma}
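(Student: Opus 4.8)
The plan is to use the algebra morphism $v:H\rightarrow B$ to endow $B$ with an $H$-bimodule structure, and then to recognise the five axioms (\ref{qb1})--(\ref{qb5}) of a two-sided two-cosided quasi-Hopf $H$-bimodule as mere reformulations of the (bi)comodule algebra axioms that $B$ already satisfies. First I would define the bimodule structure on $B$ by $h\cdot b=v(h)b$ and $b\cdot h=b\,v(h)$, for $h\in H$ and $b\in B$. Since $v$ is an algebra map and the multiplication of $B$ is associative, this is immediately a well-defined $H$-bimodule structure.

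Next I would verify that $\rho_B$ and $\lambda_B$ are morphisms of $H$-bimodules, where $B\ot H$ and $H\ot B$ carry the tensor-product bimodule structures of $_H{\cal M}_H$; explicitly $h\cdot(b\ot h')=v(h_1)b\ot h_2h'$ and $(b\ot h')\cdot h=b\,v(h_1)\ot h'h_2$, and similarly for $H\ot B$. Each of the four required identities reduces to the fact that $\rho_B$ and $\lambda_B$ are algebra maps (because $B$ is a comodule algebra) together with the coactions on the image of $v$, namely $\rho_B(v(h))=v(h_1)\ot h_2$ and $\lambda_B(v(h))=h_1\ot v(h_2)$. For instance, $\rho_B(h\cdot b)=\rho_B(v(h))\rho_B(b)=(v(h_1)\ot h_2)(b_{(0)}\ot b_{(1)})=v(h_1)b_{(0)}\ot h_2b_{(1)}=h\cdot\rho_B(b)$, and the remaining cases are analogous.

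The counit axioms (\ref{qb1}) and (\ref{qb3}) are then literally the comodule-algebra counit axioms (\ref{rca3}) and (\ref{lca3}). The heart of the matter is the coassociativity-type axioms (\ref{qb2}), (\ref{qb4}) and (\ref{qb5}), and here the decisive observation is that the three associators of $B$ arise from $\Phi$ by inserting $v$ in the appropriate slot: $\Phi_{\rho_B}=(v\ot id\ot id)(\Phi)$, $\Phi_{\lambda_B}=(id\ot id\ot v)(\Phi)$ and $\Phi_{\l_B,\r_B}=(id\ot v\ot id)(\Phi)$. It follows that left (respectively right) multiplication by $\Phi_{\rho_B}$ in the algebra $B\ot H\ot H$ coincides with the left (respectively right) bimodule action of $\Phi$ defined above, and likewise for $\Phi_{\lambda_B}$ on $H\ot H\ot B$ and for $\Phi_{\l_B,\r_B}$ on $H\ot B\ot H$. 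With this dictionary in place, axiom (\ref{rca1}) for the right comodule algebra $B$ becomes exactly (\ref{qb2}), axiom (\ref{lca1}) becomes (\ref{qb4}), and the bicomodule compatibility (\ref{bca1}) becomes (\ref{qb5}), with no further computation.

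I expect the only real difficulty to be bookkeeping: one must check that the slot into which $v$ is inserted in each associator matches precisely the tensor factor on which the corresponding component of $\Phi$ acts through the $v$-induced action, so that ``multiplication by the associator'' and ``bimodule action of $\Phi$'' genuinely agree termwise. Once this is confirmed, the whole statement follows formally from the comodule and bicomodule algebra axioms already available for $B$.
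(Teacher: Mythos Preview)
Your proposal is correct and follows essentially the same approach as the paper: define the $H$-bimodule structure on $B$ via $v$, check that $\rho_B$ and $\lambda_B$ are bimodule maps, and then observe that because the three associators of $B$ are obtained by applying $v$ to one tensor leg of $\Phi$, the comodule-algebra axioms (\ref{rca1}), (\ref{lca1}) and (\ref{bca1}) translate verbatim into the two-sided two-cosided Hopf bimodule axioms (\ref{qb2}), (\ref{qb4}) and (\ref{qb5}). The paper simply cites \cite{pvo}, Lemma 2.3, for the right-side verifications and then says the left side is analogous, whereas you spell out the mechanism explicitly.
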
 
\begin{proof}
Obviously,  $B$ becomes an $H$-bimodule via $v$ (i.e. $h\cdot b\cdot h'=
v(h)bv(h')$ for all $h, h'\in H$ and $b\in B$). In \cite{pvo}, Lemma 2.3, was proved that 
$\rho _B:B\rightarrow B\ot H$ is an $H$-bimodule map and that the conditions (\ref{qb1}) and 
(\ref{qb2}) for $B$ are satisfied. Similarly one can prove that $\lambda _B:B\rightarrow H\ot B$ is 
an $H$-bimodule map and that the conditions (\ref{qb3}) and 
(\ref{qb4}) for $B$ are satisfied. Finally, the condition (\ref{qb5}) is also satisfied, because 
it reduces to the condition (\ref{bca1}) from the definition of an $H$-bicomodule algebra, 
due to the fact that $\Phi _{\l _B, \r _B}=X^1\ot v(X^2)\ot X^3$.
\end{proof}

We can prove now the structure theorem for quasi-Hopf bicomodule algebras. 
\begin{theorem}
Let $H$ be a quasi-Hopf algebra, $B$ an $H$-bicomodule algebra and $v:H\rightarrow B$ 
a morphism of $H$-bicomodule algebras. Regard $B\in \;$$^H_H{\cal M}_H^H$ as in 
Lemma \ref{4colturi} and define $A=B^{co(H)}$. Then $A$ is an algebra in $^H_H{\cal YD}$ and, 
if we regard $A\# H$ as an $H$-bicomodule algebra as in Proposition \ref{YD}, then 
the map $\Psi :A\# H\rightarrow B$, $\Psi (a\# h)=av(h)$, is an 
isomorphism of $H$-bicomodule algebras. 
\end{theorem}
\begin{proof}
Since $v$ is in particular a morphism of right $H$-comodule algebras, we know from \cite{pvo} 
that $A$ endowed with a certain multiplication and with the $H$-action given by (\ref{act}) 
becomes a left $H$-module algebra and the map $\Psi :A\# H\rightarrow B$ 
defined above is an isomorphism of 
right $H$-comodule algebras. It is very easy to see that $\Psi $ respects the left and two-sided associators, 
so the only things left to prove are that $A$ is an algebra in $^H_H{\cal YD}$ and that 
$\Psi $ intertwines the left $H$-coactions of $B$ and $A\# H$. By Theorem \ref{struct4corners}, 
we obtain that $A$ is an object in $^H_H{\cal YD}$ if we endow it with the coaction 
$a\mapsto a_{<-1>}\ot E(a_{<0>}):=a^{(-1)}\ot a^{(0)}$. Also, from Theorem 
\ref{struct4corners}, we have that the map $\Psi :A\ot  H\rightarrow B$ is a morphism 
in $^H_H{\cal M}_H^H$, in particular we have $\lambda _B\circ \Psi =
(id_H\ot \Psi )\circ \lambda _{A\# H}$, i.e. $\lambda _{A\# H}=(id_H\ot \Psi ^{-1})\circ 
\lambda _B\circ \Psi $, where $\lambda _{A\# H}(a\ot h)=X^1(x^1\triangleright a)^{(-1)}x^2h_1\ot 
(X^2\triangleright (x^1\triangleright a)^{(0)}\ot X^3x^3h_2)$. Since $\Psi $ and $\lambda _B$ 
are algebra maps, it follows that  $\lambda _{A\# H}$ is also an algebra map. We can now apply 
Proposition \ref{converseYD} to obtain that $A$ is an algebra in $^H_H{\cal YD}$. 
Finally, the fact that $\Psi $ intertwines the left $H$-coactions on $B$ and $A\# H$ 
follows from the fact that  $\lambda _B\circ \Psi =
(id_H\ot \Psi )\circ \lambda _{A\# H}$ and the fact that the $H$-coaction of the comodule 
algebra $A\# H$, as defined in Proposition \ref{YD}, is exactly the map 
$\lambda _{A\# H}$ defined above.
\end{proof}
\section{Weak Hopf bicomodule algebras}\label{sec2}
\setcounter{equation}{0}
${\;\;\;}$Following \cite{bns}, a weak Hopf algebra $H$ is a linear space such that $(H, \mu , 1)$ is an 
associative unital algebra, $(H, \Delta, \varepsilon )$ is a coassociative counital coalgebra and there exists 
a $k$-linear map $S:H\rightarrow H$ (called the antipode),  such that 
the following axioms hold:
\begin{eqnarray}
&&\Delta (hh')=\Delta (h)\Delta (h'),\;\;\;\forall \;h, h'\in H, \\
&&\Delta ^2(1)=(\Delta (1)\ot 1)(1\ot \Delta (1))=(1\ot \Delta (1))(\Delta (1)\ot 1), \label{delta21} \\
&&\varepsilon (xyz)=\varepsilon (xy_1)\varepsilon (y_2z)=\varepsilon (xy_2)\varepsilon 
(y_1z), \;\;\;\forall \;x, y, z\in H, \label{exyz}\\
&&h_1S(h_2)=\varepsilon (1_1h)1_2, \;\;\;\forall \;h\in H,\\
&&S(h_1)h_2=1_1\varepsilon (h1_2), \;\;\;\forall \;h\in H,\\
&&S(h_1)h_2S(h_3)=S(h), \;\;\;\forall \;h\in H.
\end{eqnarray}

 For such $H$, there exist two idempotent maps 
$\varepsilon _t, \varepsilon _s:H\rightarrow H$ defined by $\varepsilon _t(h)=\varepsilon (1_1h)1_2$, 
$\varepsilon _s(h)=1_1\varepsilon (h1_2)$, for all $h\in H$, called the target map and respectively the 
source map; their images, denoted by $H_t$ and respectively $H_s$, are called the target space and respectively the source space. 

For a weak Hopf algebra $H$, the following relations hold (see \cite{bns}, 
\cite{cwy} for proofs):
\begin{eqnarray}
&&h_1S(h_2)=\varepsilon _t(h), \;\;\;S(h_1)h_2=\varepsilon _s(h), \\
&&1_1\ot \varepsilon _t(1_2)=1_1\ot 1_2=\varepsilon _s(1_1)\ot 1_2, \\
&&\varepsilon _t(h\varepsilon _t(h'))=\varepsilon _t(hh'), \;\;\;
\varepsilon _s(\varepsilon _s(h)h')=\varepsilon _s(hh'), \\
&&\Delta (H_t)\subseteq H\ot H_t, \;\;\;\Delta (H_s)\subseteq H_s\ot H, \\
&&h_1\ot \varepsilon _t(h_2)=1_1h\ot 1_2, \;\;\;\varepsilon _s(h_1)\ot h_2=
1_1\ot h1_2, \\
&&h\varepsilon _t(h')=\varepsilon (h_1h')h_2, \;\;\;\varepsilon _s(h)h'=
h'_1\varepsilon (hh'_2), \label{cucu}\\
&&\varepsilon _t(\varepsilon _t(h)h')=\varepsilon _t(h)\varepsilon _t(h'), \;\;\;
\varepsilon _s(h\varepsilon _s(h'))=\varepsilon _s(h)\varepsilon _s(h'), \label{lala} \\
&&\varepsilon _t(h_1)h_2=h=h_1\varepsilon _s(h_2),  \label{est} \\
&&\Delta (1)=1_1\ot \varepsilon _t(1_2)=\varepsilon _s(1_1)\ot 1_2\in H_s\ot H_t, \label{delta1}\\
&&h_1\ot \varepsilon _s(h_2)=h1_1\ot S(1_2), \;\;\;
\varepsilon _t(h_1)\ot h_2=S(1_1)\ot 1_2h,  \label{titi}\\
&&\varepsilon (h\varepsilon _t(h'))=\varepsilon (hh')=
\varepsilon (\varepsilon _s(h)h'), \label{dudu}
\end{eqnarray}
for all $h, h'\in H$. 
Moreover, $H_t$ and $H_s$ are subalgebras of $H$ (containing $1$) and, for all $h\in H$, 
$y\in H_s$ and $z\in H_t$, the following relations hold:
\begin{eqnarray}
&&yz=zy, \label{comutst} \\
&&\Delta (y)=1_1\ot y1_2=1_1\ot 1_2y, \\
&&\Delta (z)=1_1z\ot 1_2=z1_1\ot 1_2, \label{deltaz} \\
&&y1_1\ot S(1_2)=1_1\ot S(1_2)y, \\
&&zS(1_1)\ot 1_2=S(1_1)\ot 1_2z, \\
&&h_1y\ot h_2=h_1\ot h_2S(y), \\
&&h_1\ot zh_2=S(z)h_1\ot h_2. \label{2.31b}
\end{eqnarray}

 Let $H$ be a weak Hopf algebra as above and $(A, \mu _A, 1_A)$ an associative 
unital algebra. Then $A$ is called a left $H$-module algebra (see for instance \cite{nik}) if 
$A$ is a left $H$-module with action denoted by $H\ot A\rightarrow A$, $h\ot a\mapsto h\cdot a$, 
satisfying the conditions: 
\begin{eqnarray}
&&h\cdot (ab)=(h_1\cdot a)(h_2\cdot b), \;\;\;
h\cdot 1_A=\varepsilon _t(h)\cdot 1_A, \label{modalg1}
\end{eqnarray}
for all $h\in H$ and $a, b\in A$. If this is the case, we can define the smash product 
$A\# H$, which, as a linear space, is the (relative) tensor product $A\ot _{H_t}H$, where $H$ 
is a left $H_t$-module via multiplication and $A$ is a right $H_t$-module as follows: 
$a\cdot z:=a(z\cdot 1_A)$, for all $a\in A$, $z\in H_t$. 
This linear space $A\# H$ becomes an associative algebra with unit $1_A\# 1_H$ and 
multiplication defined by 
$(a\# h)(a'\# h')=a(h_1\cdot a')\# h_2h'$, 
for all $a, a'\in A$ and $h, h'\in H$, where we denoted by $a\# h$ the class of $a\ot h$ in 
$A\ot _{H_t}H$. 
\begin{definition} (\cite{bohm})
Let $H$ be a weak Hopf algebra and $(A, \mu _A, 1_A)$ an associative 
unital algebra. \\
(i) $A$ is called a right $H$-comodule algebra if there is a linear map 
$\rho :A\rightarrow A\ot H$ such that:
\begin{eqnarray}
&&(id_A\ot \varepsilon )\circ \rho =id_A, \\
&&(\rho \ot id_H)\circ \rho =(id_A\ot \Delta )\circ \rho, \label{2.2a} \\
&&\rho (1_A)(a\ot 1_H)=((id_A\ot \varepsilon _t)\circ \rho )(a), \;\;\;\forall \;a\in A, \label{2.2b} \\
&&\rho (ab)=\rho (a)\rho (b),\;\;\;\forall \;a, b\in A.
\end{eqnarray}
(ii) $A$ is called a left $H$-comodule algebra if there is a linear map 
$\lambda :A\rightarrow H\ot A$ such that: 
\begin{eqnarray}
&&(\varepsilon \ot id_A)\circ \lambda =id_A, \\
&&(id_H\ot \lambda )\circ \lambda =(\Delta \ot id_A)\circ \lambda, \label{2.1a} \\
&&(1_H\ot a)\lambda (1_A)=((\varepsilon _s\ot id_A)\circ \lambda )(a), \;\;\;\forall \;a\in A, 
\label{2.1b} \\
&&\lambda (ab)=\lambda (a)\lambda (b), \;\;\;\forall \;a, b\in A. \label{2.1c}
\end{eqnarray}
(iii) A is called an $H$-bicomodule algebra if it is a right and left $H$-comodule algebra 
and the coactions $\rho $ and $\lambda $ satisfy the bicomodule condition 
$(\lambda \ot id_H)\circ \rho =(id_H\ot \rho )\circ \lambda $. If $A$, $B$ are two 
$H$-bicomodule algebras, a morphism 
of $H$-bicomodule algebras $f:A\rightarrow B$ is an algebra map intertwining the right and 
left coactions. 
\end{definition}

One can see that the condition (\ref{2.1b}) may be replaced by any of the following two 
equivalent conditions (that appear in \cite{nsw}, respectively \cite{nv}):
\begin{eqnarray}
&&(\Delta \ot id_A)(\lambda (1_A))=(1_H\ot \lambda (1_A))(\Delta (1_H)\ot 1_A), 
\label{NSW} \\
&&\lambda (1_A)=(\varepsilon _s\ot id_A)(\lambda (1_A)). \label{NV}
\end{eqnarray}

If $H$ is a weak Hopf algebra and $A$ is a left $H$-module algebra, then $A\# H$ 
becomes a right $H$-comodule algebra, with coaction $\rho :A\# H\rightarrow 
(A\# H)\ot H$, $\rho (a\# h)=(a\# h_1)\ot h_2$. 
\begin{definition} (\cite{wang}) 
Let $H$ be a weak Hopf algebra. A weak Hopf bimodule $M$ over $H$ is a linear space 
which is an $H$-bimodule and an $H$-bicomodule such that the two coactions are 
morphisms of $H$-bimodules. The category whose objects are weak Hopf bimodules and 
morphisms are linear maps intertwining the bimodule and bicomodule structures is denoted by 
$_H^H{\mathcal M}_H^H$.  
\end{definition}

Similarly, we can define the category $_H{\mathcal M}_H^H$. If $M$ is an object in this category, 
with $H$-module structures denoted by $\cdot $ and right $H$-comodule structure denoted by 
$\rho (m)=m_{(0)}\ot m_{(1)}$, define the map $E:M\rightarrow M$, 
$E(m)=m_{(0)}\cdot S(m_{(1)})$ and $M^{co (H)}:=\{m\in M/\rho (m)=
m_{(0)}\ot \varepsilon _t(m_{(1)})\}$. Then, by 
\cite{zhang}, \cite{yin}, $M^{co(H)}$ is a left $H$-module with action: 
\begin{eqnarray}
&&h\triangleright m:=E(h\cdot m), \;\;\;\forall \;h\in H, m\in M^{co(H)}. \label{tria}
\end{eqnarray}
\begin{definition} (\cite{cwy}) Let $H$ be a weak Hopf algebra. A (left-left) Yetter-Drinfeld module 
over $H$ is a linear space $M$ with a left $H$-module structure (denoted by $h\ot m\mapsto h\cdot m$) 
and a left $H$-comodule structure (denoted by $m\mapsto m_{(-1)}\ot m_{(0)}\in H\ot M$) 
such that the following conditions are satisfied, for all $h\in H$, $m\in M$:
\begin{eqnarray}
&&m_{(-1)}\ot m_{(0)}=1_1m_{(-1)}\ot 1_2\cdot m_{(0)}, \label{wyd1} \\
&&(h_1\cdot m)_{(-1)}h_2\ot (h_1\cdot m)_{(0)}=
h_1m_{(-1)}\ot h_2\cdot m_{(0)}. \label{wyd2}
\end{eqnarray}
We denote by $_H^H{\mathcal YD}$ the category whose objects are Yetter-Drinfeld modules and 
morphisms are $H$-linear $H$-colinear maps. 
\end{definition}

Exactly as in the Hopf case, condition (\ref{wyd2}) may be replaced by the equivalent condition 
\begin{eqnarray}
&&(h\cdot m)_{(-1)}\ot (h\cdot m)_{(0)}=h_1m_{(-1)}S(h_3)\ot h_2\cdot m_{(0)}. \label{wyd3}
\end{eqnarray}
\begin{theorem} \label{weakYD}
Assume that $H$ is a weak Hopf algebra and $A$ is a linear space such that 
$A$ is a left $H$-module algebra (with action $h\ot a\mapsto h\cdot a$), $A$ is a left $H$-comodule 
algebra (with coaction $\lambda :A\rightarrow H\ot A$, $\lambda (a):=a^{(-1)}\ot a^{(0)}$) and 
$(A, \cdot , \lambda )$ is a (left-left) Yetter-Drinfeld module. Then $A\# H$ is an $H$-bicomodule 
algebra, with coactions
\begin{eqnarray*}
&&\rho :A\# H\rightarrow 
(A\# H)\ot H, \;\;\;\rho (a\# h)=(a\# h_1)\ot h_2, \\
&&\lambda :A\# H\rightarrow H\ot (A\# H), \;\;\;\lambda (a\# h)=a^{(-1)}h_1\ot 
(a^{(0)}\# h_2),   
\end{eqnarray*}
and the linear map $j:H\rightarrow A\# H$, $j(h)=1_A\# h$, is a morphism of 
$H$-bicomodule algebras. 
\end{theorem}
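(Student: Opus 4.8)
The plan is to treat the right $H$-comodule algebra structure $(A\# H,\rho)$ as already available (it is recorded immediately before the definition of weak Hopf bimodule), so that the work splits into three tasks: that $\lambda$ turns $A\# H$ into a left $H$-comodule algebra, that $\rho$ and $\lambda$ satisfy the bicomodule condition, and that $j$ is a morphism of $H$-bicomodule algebras. I would dispatch the easy parts first. Coassociativity $(id\ot\lambda)\circ\lambda=(\Delta\ot id)\circ\lambda$ is immediate: using coassociativity of $\Delta$, coassociativity (\ref{2.1a}) of $\lambda_A$, and multiplicativity of $\Delta$, both sides reduce to $(a^{(-1)})_1h_1\ot(a^{(-1)})_2h_2\ot(a^{(0)}\# h_3)$. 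The weak unit axiom in the form (\ref{NV}) is also quick, since the first leg of $\lambda(1_A\# 1_H)$ is $1_A^{(-1)}1_1$, which lies in the subalgebra $H_s$ because $1_A^{(-1)}\in H_s$ (this is (\ref{NV}) for $A$) and $1_1\in H_s$ (by (\ref{delta1})). The bicomodule compatibility $(\lambda\ot id)\circ\rho=(id\ot\rho)\circ\lambda$ needs nothing beyond coassociativity of $\Delta$: both sides collapse to $a^{(-1)}h_1\ot(a^{(0)}\# h_2)\ot h_3$.

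For $j(h)=1_A\# h$ the algebra-map property uses the weak module-algebra unit rule $h\cdot 1_A=\varepsilon_t(h)\cdot 1_A$ from (\ref{modalg1}): expanding $j(h)j(h')$ gives $(\varepsilon_t(h_1)\cdot 1_A)\# h_2h'$, and after absorbing $\varepsilon_t(h_1)$ into the $H$-factor through the $H_t$-balancing of the smash product and applying (\ref{est}) in the form $\varepsilon_t(h_1)h_2=h$, this equals $1_A\# hh'=j(hh')$. Right $H$-colinearity $\rho\circ j=(j\ot id)\circ\Delta$ is immediate from the formula for $\rho$.

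The conceptual heart of the proof is multiplicativity of $\lambda$, and here I would test it on $(a\# h)(a'\# h')=a(h_1\cdot a')\# h_2h'$. Expanding $\lambda$ of the left-hand side and invoking multiplicativity of $\lambda_A$ yields $a^{(-1)}(h_1\cdot a')^{(-1)}h_2h'_1\ot(a^{(0)}(h_1\cdot a')^{(0)}\# h_3h'_2)$, whereas $\lambda(a\# h)\lambda(a'\# h')$ yields $a^{(-1)}h_1a'^{(-1)}h'_1\ot(a^{(0)}(h_2\cdot a'^{(0)})\# h_3h'_2)$. The two coincide exactly when
\[
(h_1\cdot a')^{(-1)}h_2\ot(h_1\cdot a')^{(0)}\ot h_3=h_1a'^{(-1)}\ot(h_2\cdot a'^{(0)})\ot h_3,
\]
which is precisely the Yetter-Drinfeld relation (\ref{wyd2}) applied to the first two Sweedler legs of $h$ with the third leg $h_3$ carried along as a spectator, the transition being legitimised by coassociativity of $\Delta$. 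This single use of (\ref{wyd2}) is where the whole module-theoretic content of the theorem resides.

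What I expect to be the real obstacle is not this step but the remaining weak-Hopf bookkeeping, namely the counit axiom $(\varepsilon\ot id)\circ\lambda=id$ and the left $H$-colinearity $\lambda\circ j=(id\ot j)\circ\Delta$. For the counit I would rewrite $\varepsilon(a^{(-1)}h_1)$ using (\ref{dudu}) and (\ref{titi}) to trade $h_1$ for $S(1_1)$ and $h_2$ for $1_2h$, then use the $H_t$-balancing to move $1_2$ onto $1_A$; for the colinearity I would write $1_A^{(-1)}=S(z)$ with $z\in H_t$ (possible since $S$ restricts to an anti-isomorphism $H_t\to H_s$), push $S(z)$ across the coproduct by (\ref{2.31b}) so that it reappears as $z$ multiplying $h_2$ inside the smash, and again exploit the $H_t$-balancing. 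In both cases the computation closes only after verifying a unit-type identity for the coaction on the unit, of the form $1_A^{(0)}(y\cdot 1_A)=1_A$ with $y$ the relevant source-space element built from $1_A^{(-1)}$, which follows from the comodule-algebra unit axiom (\ref{2.1b}) together with the source/target-space relations (\ref{comutst})--(\ref{2.31b}). Keeping all manipulations well defined modulo the $H_t$-balancing, and correctly sequencing these weak counit and source/target identities, is the part I anticipate being most delicate.
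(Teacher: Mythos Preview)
Your plan is largely sound, and your treatment of multiplicativity of $\lambda$ via the Yetter--Drinfeld relation (\ref{wyd2}) is correct---indeed the paper regards that step as routine and does not write it out. However, there is a genuine gap: you never verify that $\lambda$ is \emph{well-defined} as a map out of $A\#H=A\otimes_{H_t}H$. Since the smash product is a quotient, one must check that
\[
\lambda(a\# zh)=\lambda\bigl(a(z\cdot 1_A)\# h\bigr)\qquad\text{for all }z\in H_t,
\]
before any of the other axioms even make sense. This is not the same issue as ``keeping manipulations well defined modulo the $H_t$-balancing'' inside later computations; it is a separate identity that must be established first. The paper singles it out as one of the non-trivial parts and devotes a full calculation to it, using the comodule-algebra axiom together with \emph{both} Yetter--Drinfeld conditions (\ref{wyd1}) and (\ref{wyd3}) (to control the coaction on $z\cdot 1_A$) and several of the source/target identities. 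Your proposal does not mention this step at all.

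On the two remaining non-trivial points your sketches differ in detail from the paper. For counitality, the paper opens with (\ref{exyz}) to split $\varepsilon(a^{(-1)}h_1)=\varepsilon(a^{(-1)}1_1)\varepsilon(1_2h_1)$ and then invokes the auxiliary identity $y\cdot a=\varepsilon(a^{(-1)}y)a^{(0)}$ for $y\in H_s$ (recorded as (\ref{calanen})), rather than going through (\ref{dudu})/(\ref{titi}) as you suggest. For the left colinearity of $j$, the paper starts from (\ref{NV}) and (\ref{cucu}) rather than from writing $1_A^{(-1)}=S(z)$; note that $1_A^{(-1)}\otimes 1_A^{(0)}\in H_s\otimes A$ is a sum of simple tensors, so your phrasing ``write $1_A^{(-1)}=S(z)$ with $z\in H_t$'' would need to be interpreted with care. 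Your approaches to these two points may well close, but the missing well-definedness check is a gap that must be filled before anything else.
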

\begin{proof}
Some of the conditions to be checked are trivial, we will prove only the nontrivial ones. \\
We begin by noting that, 
with a proof similar to the one in 
Remark 2.6 in \cite{nenciu}, and respectively as a consequence of (\ref{NV}), we have
the following relations: 
\begin{eqnarray}
&&y\cdot a=\varepsilon (a^{(-1)}y)a^{(0)}, \;\;\;\forall \;a\in A, y\in H_s,  \label{calanen} \\
&&1_A^{(-1)}\ot 1_A^{(0)}\in H_s\ot A. \label{consec}
\end{eqnarray}

We prove first that $\lambda $ is well-defined, that is 
$\lambda (a\# zh)=\lambda (a(z\cdot 1_A)\# h)$, for all $a\in A$, $h\in H$, $z\in H_t$. 
We compute:
\begin{eqnarray*}
\lambda (a\# zh)&=&a^{(-1)}z_1h_1\ot (a^{(0)}\# z_2h_2)\\
&\overset{(\ref{deltaz})}{=}&a^{(-1)}z1_1h_1\ot (a^{(0)}\# 1_2h_2)\\
&=&a^{(-1)}zh_1\ot (a^{(0)}\# h_2),
\end{eqnarray*}
\begin{eqnarray*}
\lambda (a(z\cdot 1_A)\# h)&=&[a(z\cdot 1_A)]^{(-1)}h_1\ot ([a(z\cdot 1_A)]^{(0)}\# h_2)\\
&=&a^{(-1)}(z\cdot 1_A)^{(-1)}h_1\ot (a^{(0)}(z\cdot 1_A)^{(0)}\# h_2)\\
&\overset{(\ref{wyd3})}{=}&a^{(-1)}z_11_A^{(-1)}S(z_3)h_1\ot 
(a^{(0)}(z_2\cdot 1_A^{(0)})\# h_2)\\
&\overset{(\ref{deltaz})}{=}&a^{(-1)}z1_11_A^{(-1)}S(1_3)h_1\ot 
(a^{(0)}(1_2\cdot 1_A^{(0)})\# h_2)\\
&\overset{(\ref{delta21})}{=}&a^{(-1)}z1_11_A^{(-1)}S(1_{2'})h_1\ot 
(a^{(0)}(1_{1'}1_2\cdot 1_A^{(0)})\# h_2)\\
&\overset{(\ref{wyd1})}{=}&a^{(-1)}z1_A^{(-1)}S(1_{2})h_1\ot 
(a^{(0)}(1_{1}\cdot 1_A^{(0)})\# h_2)\\
&\overset{(\ref{delta1}), \;(\ref{2.31b})}{=}&a^{(-1)}z1_A^{(-1)}h_1\ot 
(a^{(0)}(1_{1}\cdot 1_A^{(0)})\# 1_2h_2)\\
&\overset{(*)}{=}&a^{(-1)}z1_A^{(-1)}h_1\ot 
(a^{(0)}(1_{1}\cdot 1_A^{(0)})(1_2\cdot 1_A)\# h_2)\\
&\overset{(\ref{modalg1})}{=}&a^{(-1)}z1_A^{(-1)}h_1\ot 
(a^{(0)}1_A^{(0)}\# h_2)\\
&\overset{(\ref{consec}), \;(\ref{comutst})}{=}&a^{(-1)}1_A^{(-1)}zh_1\ot 
(a^{(0)}1_A^{(0)}\# h_2)\\
&=&a^{(-1)}zh_1\ot 
(a^{(0)}\# h_2), 
\end{eqnarray*}
where the equality $(*)$ follows by using the fact that $A\# H$ is the tensor product over $H_t$. 

We prove now the counitality condition for $\lambda $, i.e. 
$\varepsilon (a^{(-1)}h_1)a^{(0)}\# h_2=a\# h$, 
for all $a\in A$, $h\in H$. We compute:
\begin{eqnarray*}
\varepsilon (a^{(-1)}h_1)a^{(0)}\# h_2
&\overset{(\ref{exyz})}{=}&\varepsilon (a^{(-1)}1_1)\varepsilon (1_2h_1)a^{(0)}\# h_2\\
&\overset{(\ref{delta1}), \;(\ref{calanen})}{=}&\varepsilon (1_2h_1)(1_1\cdot a)\# h_2\\
&=&\varepsilon (1_21_{1'}h_1)(1_1\cdot a)\# 1_{2'}h_2\\
&\overset{(\ref{delta21})}{=}&\varepsilon (1_{1'}1_2h_1)(1_1\cdot a)\# 1_{2'}h_2\\
&\overset{(\ref{cucu})}{=}&1_1\cdot a\# \varepsilon _t(1_2h_1)h_2\\
&\overset{(\ref{delta1})}{=}&1_1\cdot a\# \varepsilon _t(\varepsilon _t(1_2)h_1)h_2\\
&\overset{(\ref{lala})}{=}&1_1\cdot a\# \varepsilon _t(1_2)\varepsilon _t(h_1)h_2\\
&\overset{(\ref{est})}{=}&1_1\cdot a\# \varepsilon _t(1_2)h\\
&\overset{(\ref{delta1})}{=}&1_1\cdot a\# 1_2h\\
&\overset{(*)}{=}&(1_1\cdot a)(1_2\cdot 1_A)\# h\\
&\overset{(\ref{modalg1})}{=}&1\cdot (a1_A)\# h
=a\# h, 
\end{eqnarray*}
where again for proving the equality $(*)$ we used the fact that the tensor product is over $H_t$.

We prove now the condition (\ref{2.1b}) in the definition of a 
left $H$-comodule algebra. As we have seen, this is equivalent to the condition 
(\ref{NV}), so it is enough to prove (\ref{NV}) for our $\lambda $, namely 
$\lambda (1_A\# 1_H)=(\varepsilon _s\ot id_A\ot id_H)(\lambda (1_A\# 1_H))$.  We compute:
\begin{eqnarray*}
(\varepsilon _s\ot id_A\ot id_H)(\lambda (1_A\# 1_H))&=&
(\varepsilon _s\ot id_A\ot id_H)(1_A^{(-1)}1_1\ot (1_A^{(0)}\# 1_2))\\
&=&\varepsilon _s(1_A^{(-1)}1_1)\ot (1_A^{(0)}\# 1_2)\\
&\overset{(\ref{delta1})}{=}&\varepsilon _s(1_A^{(-1)}\varepsilon _s(1_1))\ot (1_A^{(0)}\# 1_2)\\
&\overset{(\ref{lala})}{=}&\varepsilon _s(1_A^{(-1)})\varepsilon _s(1_1)\ot (1_A^{(0)}\# 1_2)\\
&\overset{(\ref{delta1})}{=}&\varepsilon _s(1_A^{(-1)})1_1\ot (1_A^{(0)}\# 1_2)\\
&\overset{(\ref{NV})}{=}&1_A^{(-1)}1_1\ot (1_A^{(0)}\# 1_2)
=\lambda (1_A\# 1_H),  \;\;\;q.e.d.
\end{eqnarray*}

The only nontrivial thing left to prove is that the map $j$ intertwines the left coactions, that is 
$1_A^{(-1)}h_1\ot 1_A^{(0)}\# h_2=h_1\ot 1_A\# h_2$, for all $h\in H$. We compute:
\begin{eqnarray*}
1_A^{(-1)}h_1\ot 1_A^{(0)}\# h_2&\overset{(\ref{NV})}{=}&
\varepsilon _s(1_A^{(-1)})h_1\ot 1_A^{(0)}\# h_2\\
&\overset{(\ref{cucu})}{=}&h_1\ot \varepsilon (1_A^{(-1)}h_2)1_A^{(0)}\# h_3\\
&\overset{(\ref{dudu})}{=}&h_1\ot \varepsilon (1_A^{(-1)}\varepsilon _t(h_2))1_A^{(0)}\# h_3\\
&\overset{(\ref{titi})}{=}&h_1\ot \varepsilon (1_A^{(-1)}S(1_1))1_A^{(0)}\# 1_2h_2\\
&\overset{(*)}{=}&h_1\ot [\varepsilon (1_A^{(-1)}S(1_1))1_A^{(0)}][1_2\cdot 1_A]\# h_2\\
&\overset{(\ref{titi}), \;(\ref{dudu})}{=}&
h_1\ot [\varepsilon (1_A^{(-1)}1_1)1_A^{(0)}][1_2\cdot 1_A]\# h_2\\
&\overset{(\ref{delta1}), \;(\ref{calanen})}{=}&
h_1\ot [1_1\cdot 1_A][1_2\cdot 1_A]\# h_2\\
&\overset{(\ref{modalg1})}{=}&h_1\ot 1_A\# h_2, 
\end{eqnarray*}
where again for proving the equality $(*)$ we used the fact that the tensor product is over $H_t$.
\end{proof}

Let $H$ be a weak Hopf algebra with bijective antipode. It was proved in \cite{wangzhu} that 
there exists an equivalence of categories between $_H^H{\mathcal M}_H^H$ and 
the category of right-right Yetter-Drinfeld modules over $H$. We will need the 
left-handed analogue of this result, whose proof is analogous to the one in \cite{wangzhu} 
and is left to the reader: 
\begin{proposition} \label{weakstruct4corners}
Let $H$ be a weak Hopf algebra with bijective antipode. \\
(i) Let $V\in \;_H^H{\mathcal YD}$, with $H$-action denoted by $\triangleright $ and 
$H$-coaction $V\rightarrow H\ot V$, $v\mapsto v^{(-1)}\ot v^{(0)}$. Then $V\ot _{H_t}H$ 
becomes an object in $_H^H{\mathcal M}_H^H$ with structures:
\begin{eqnarray*}
&&a\cdot (v\ot h)=a_1\triangleright v\ot a_2h, \;\;\;
(v\ot h)\cdot b=v\ot hb, \\
&&\lambda _{V\ot _{H_t}H}(v\ot h)=v^{(-1)}h_1\ot (v^{(0)}\ot h_2), \\
&&\rho _{V\ot _{H_t}H}(v\ot h)=(v\ot h_1)\ot h_2, 
\end{eqnarray*}
for all $a, b, h\in H$ and $v\in V,$ where $V$ is regarded as a right $H_t$-module by the formula 
$v\cdot z=S(z)\triangleright v$, for all $v\in V$ and $z\in H_t$. \\
(ii) Let $M\in \;_H^H{\mathcal M}_H^H$. Consider $V=M^{co(H)}$ as a left $H$-module 
with action $\triangleright $ as in (\ref{tria}) and define the map $V\rightarrow H\ot V$, 
$v\mapsto v_{<-1>}\ot v_{<0>}$, where we denoted by $M\rightarrow H\ot M$, 
$m\mapsto m_{<-1>}\ot m_{<0>}$ the left $H$-coaction on $M$. Then with these structures $V$ is 
an object in $_H^H{\mathcal YD}$, and if we regard $V\ot _{H_t}H\in \;_H^H{\mathcal M}_H^H$ 
as in (i), the map $\nu :V\ot _{H_t}H\rightarrow M$, $\nu (v\ot h)=v\cdot h$ is an isomorphism 
in $_H^H{\mathcal M}_H^H$. 
\end{proposition}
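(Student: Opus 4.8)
The plan is to mimic the quasi-Hopf argument of \thref{struct4corners}, transporting the proof of \cite{wangzhu} from the right-right to the left-left side, with the projection $E(m)=m_{(0)}\cdot S(m_{(1)})$ onto $M^{co(H)}$ playing the central role; bijectivity of $S$ enters exactly as in \cite{wangzhu}, through the right $H_t$-module structure $v\cdot z=S(z)\triangleright v$ and its use in the inverse map. For part (i), I would verify directly that $V\ot_{H_t}H$ with the listed structures lies in ${}_H^H{\mathcal M}_H^H$. The first point is well-definedness over $H_t$: since $V$ is a right $H_t$-module via $v\cdot z=S(z)\triangleright v$ and $H$ a left $H_t$-module by multiplication, one checks that each of the four structure maps factors through the balanced tensor product, the decisive ingredients being $\Delta(z)=1_1z\ot 1_2=z1_1\ot 1_2$ for $z\in H_t$ (\ref{deltaz}) together with (\ref{2.31b}).

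After this, the $H$-bimodule axioms (associativity and commutativity of the two actions, unitality) and the $H$-bicomodule axioms are checked in turn: counitality and coassociativity of $\rho$ and $\lambda$ are routine, whereas the normalization conditions (\ref{2.2b}) and (\ref{NV}) and the bicomodule compatibility $(\lambda\ot id_H)\circ\rho=(id_H\ot\rho)\circ\lambda$ require the separability identities (\ref{delta1}), (\ref{titi}). The essential step is that the coactions are $H$-bimodule maps; while the compatibility of $\rho$ with both actions is essentially coassociativity, the compatibility of $\lambda$ with the \emph{left} action reduces exactly to the Yetter-Drinfeld condition (\ref{wyd2}) (equivalently (\ref{wyd3})) for $V$, which is precisely where the Yetter-Drinfeld structure enters.

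For part (ii), I would first equip $V=M^{co(H)}$ with the $H$-action $\triangleright$ of (\ref{tria}), already known to be a left action by \cite{zhang}, \cite{yin}, and with the coaction obtained by restricting $\lambda_M$. One must verify that $v\mapsto v_{<-1>}\ot v_{<0>}$ really lands in $H\ot V$, i.e.\ that $v_{<0>}\in M^{co(H)}$ for $v\in V$; this follows from the bicomodule compatibility in $M$ and the fact that $E$ projects onto the coinvariants. The Yetter-Drinfeld axioms (\ref{wyd1}) and (\ref{wyd2}) for $V$ are then obtained by combining the $E$-identities (the weak analogues of those listed after (\ref{act}): $E^2=E$, $h\triangleright E(m)=E(h\cdot m)$, $h\cdot E(m)=(h_1\triangleright E(m))\cdot h_2$, and $E(m_{(0)})\cdot m_{(1)}=m$) with the weak Hopf relations governing $\varepsilon_t,\varepsilon_s$.

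The isomorphism $\nu(v\ot h)=v\cdot h$ has candidate inverse $\nu^{-1}(m)=E(m_{(0)})\ot m_{(1)}$, built from $\rho_M(m)=m_{(0)}\ot m_{(1)}$. I would check that $\nu^{-1}$ is well-defined (it lands in $V\ot_{H_t}H$ since $E$ maps onto $M^{co(H)}$, and it respects the $H_t$-balancing), that $\nu\circ\nu^{-1}=id_M$ is precisely $E(m_{(0)})\cdot m_{(1)}=m$, and that $\nu^{-1}\circ\nu=id$ follows from $E(v)=v$ for $v\in V$ together with coassociativity and counitality of $\rho$ on $V\ot_{H_t}H$. Finally $\nu$ intertwines all four structures: compatibility with the right action and both coactions is built into the definitions, while compatibility with the left action, $\nu(a_1\triangleright v\ot a_2h)=a\cdot(v\cdot h)$, reduces to the identity $h\cdot E(m)=(h_1\triangleright E(m))\cdot h_2$. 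I expect the main obstacle to be the verification of the Yetter-Drinfeld axioms for $M^{co(H)}$ and that $\nu$ (equivalently $\lambda$) preserves the left coaction: these are exactly the places where the $H_t$-balancing, the normalizations (\ref{NV}) and (\ref{2.2b}), and the idempotent $\Delta(1)$ interact, so the bookkeeping is heavier than in the classical Hopf case, even though no idea beyond \cite{wangzhu} is required.
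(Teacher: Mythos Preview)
Your proposal is correct and matches the paper's approach exactly: the paper gives no proof of this proposition, stating only that it is the left-handed analogue of the result in \cite{wangzhu} and leaving the verification to the reader. Your outline carries out precisely this transport from the right-right to the left-left setting, with the expected use of the projection $E$, the $E$-identities, and the $H_t$-balancing, so there is nothing to add.
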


We fix now a weak Hopf algebra $H$ with bijective antipode and an 
$H$-bicomodule algebra $B$, with coactions $\lambda _B$, $\rho _B$ with notation 
$\rho _B(b)=b_{(0)}\ot b_{(1)}\in B\ot H$ and 
$\lambda _B(b)=b_{<-1>}\ot b_{<0>}\in H\ot B$, such that there exists $v:H\rightarrow B$ a 
morphism of $H$-bicomodule algebras; in particular, this implies $\rho _B(v(h))=v(h_1)\ot h_2$ 
and $\lambda _B(v(h))=h_1\ot v(h_2)$, for all $h\in H$. 
\begin{lemma}
$B$ becomes an object in $_H^H{\mathcal M}_H^H$, with coactions $\lambda _B$ and $\rho _B$ and 
actions $h\cdot b=v(h)b$ and 
$b\cdot h=bv(h)$, for all $h\in H$ and $b\in B$. 
\end{lemma}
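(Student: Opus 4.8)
The plan is to verify, one by one, that $(B,\lambda_B,\rho_B)$ with the bimodule structure induced by $v$ satisfies all the defining axioms of a weak Hopf bimodule, i.e.\ that $B\in {}_H^H{\mathcal M}_H^H$. Recall that an object of ${}_H^H{\mathcal M}_H^H$ is a linear space which is simultaneously an $H$-bimodule and an $H$-bicomodule, such that the two coactions $\lambda_B$ and $\rho_B$ are morphisms of $H$-bimodules. So the work splits into three independent tasks: (1) checking that $h\cdot b=v(h)b$ and $b\cdot h=bv(h)$ really define an $H$-bimodule structure; (2) checking that $(B,\lambda_B,\rho_B)$ is an $H$-bicomodule; and (3) checking that $\rho_B$ and $\lambda_B$ are each $H$-bimodule maps.

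For task (1), the bimodule axioms are essentially automatic: since $v:H\rightarrow B$ is an algebra map, associativity and unitality of the left action $v(h)b$, the right action $bv(h)$, and their commutativity $v(h)(bv(h'))=(v(h)b)v(h')$ all follow directly from associativity of multiplication in $B$ together with $v(1_H)=1_B$ and $v(hh')=v(h)v(h')$. For task (2), the bicomodule condition $(\lambda_B\ot id_H)\circ\rho_B=(id_H\ot\rho_B)\circ\lambda_B$ is exactly the defining compatibility between $\lambda_B$ and $\rho_B$ that makes $B$ an $H$-bicomodule algebra, so it holds by hypothesis; counitality and coassociativity of each coaction separately are part of the comodule-algebra axioms already assumed for $B$.

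The only genuinely nontrivial part is task (3), showing that $\rho_B$ and $\lambda_B$ are $H$-bimodule maps; I expect this to be the main obstacle. Consider $\rho_B$: I must show $\rho_B(v(h)b\,v(h'))=(h\cdot(-)\cdot h')$ applied to $\rho_B(b)$ in $B\ot H$, where $H$ carries its regular bimodule structure $h\cdot x\cdot h'=hxh'$. Since $\rho_B$ is an algebra map (coaction of a comodule algebra) and $\rho_B(v(h))=v(h_1)\ot h_2$ by the hypothesis that $v$ is a morphism of bicomodule algebras, I compute $\rho_B(v(h)b\,v(h'))=\rho_B(v(h))\rho_B(b)\rho_B(v(h'))=(v(h_1)\ot h_2)(b_{(0)}\ot b_{(1)})(v(h'_1)\ot h'_2)$. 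Carrying out the multiplication in $B\ot H$ gives $v(h_1)b_{(0)}v(h'_1)\ot h_2 b_{(1)}h'_2$, which is precisely $h\cdot b_{(0)}\cdot h'\ot h\cdot b_{(1)}\cdot h'$ under the induced bimodule structures; hence $\rho_B$ is an $H$-bimodule map. The analogous computation for $\lambda_B$, using $\lambda_B(v(h))=h_1\ot v(h_2)$ and multiplicativity of $\lambda_B$, shows that $\lambda_B$ is also an $H$-bimodule map. The subtle point to keep track of throughout is the interplay between the weak-Hopf unit element $1_H$ (whose coproduct is not grouplike) and the smash-product tensor being taken over $H_t$; but here, because $B$ is an honest algebra and we are only using multiplicativity of the coactions and the explicit form of $\rho_B\circ v$ and $\lambda_B\circ v$, these complications do not intrude, and the verification remains a clean application of the comodule-algebra axioms.
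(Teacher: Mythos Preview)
Your proof is correct and is precisely the ``straightforward computation'' the paper leaves to the reader: the bimodule structure comes from $v$ being an algebra map, the bicomodule structure is part of the bicomodule-algebra hypothesis, and bilinearity of $\rho_B$ and $\lambda_B$ follows from their multiplicativity together with $\rho_B(v(h))=v(h_1)\ot h_2$, $\lambda_B(v(h))=h_1\ot v(h_2)$. Two cosmetic remarks: the expression ``$h\cdot b_{(0)}\cdot h'\ot h\cdot b_{(1)}\cdot h'$'' should of course read $h_1\cdot b_{(0)}\cdot h'_1\ot h_2\, b_{(1)}\, h'_2$ (the diagonal action), and your caveat about the tensor over $H_t$ is irrelevant here, since no relative tensor product enters this lemma.
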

\begin{proof}
A straightforward computation. 
\end{proof}

Since $B$ is an object in $_H^H{\mathcal M}_H^H$, we can consider the coinvariants 
$A:=B^{co(H)}$. By \cite{zhang}, \cite{yin} we know that $(A, \triangleright , 1_B)$ 
is a left $H$-module algebra (where the action $\triangleright $ is defined as above by 
$h\triangleright a=E(h\cdot a)=E(v(h)a)$, for all $h\in H$, $a\in A$, and the 
multiplication of $A$ is the restriction to $A$ of the multiplication of $B$) and the map 
$\phi :A\# H\rightarrow B$, $\phi (a\# h)=av(h)$ is an isomorphism of 
right $H$-comodule algebras. By Proposition \ref{weakstruct4corners} we know that $A$ is an 
object in $_H^H{\mathcal YD}$. 
\begin{theorem}
With notation as above, $A$ is also a left $H$-comodule algebra, and if we regard $A\# H$ 
as an $H$-bicomodule algebra as in Theorem \ref{weakYD} then the map 
$\phi :A\# H\rightarrow B$, $\phi (a\# h)=av(h)$ is an isomorphism of 
$H$-bicomodule algebras.
\end{theorem}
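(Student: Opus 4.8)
The plan is to assemble the statement from the three facts already established just before it: that $(A,\triangleright,1_B)$ is a left $H$-module algebra, that $A$ is an object of $_H^H{\mathcal YD}$ (Proposition \ref{weakstruct4corners}), and that $\phi$ is an isomorphism of right $H$-comodule algebras. The left $H$-coaction on $A$ is the one furnished by Proposition \ref{weakstruct4corners}(ii), namely the corestriction $\lambda_A:A\rightarrow H\ot A$, $\lambda_A(a)=a_{<-1>}\ot a_{<0>}=:a^{(-1)}\ot a^{(0)}$, of $\lambda_B$ to $A$; this corestriction makes sense precisely because that proposition guarantees $\lambda_B(A)\subseteq H\ot A$, a fact that ultimately rests on the bicomodule compatibility between $\lambda_B$ and $\rho_B$. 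With this coaction fixed, the only genuinely new claim in the first half is that $A$ is a left $H$-comodule algebra; once this is in hand, Theorem \ref{weakYD} immediately endows $A\#H$ with its $H$-bicomodule algebra structure.

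First I would check that $A$ is a left $H$-comodule algebra. The counit and coassociativity axioms for $\lambda_A$ hold because $A$, being an object of $_H^H{\mathcal YD}$, is in particular a left $H$-comodule. For multiplicativity I would exploit the decisive difference with the quasi-Hopf case of Section \ref{sec1}: here the multiplication of $A$ is literally the restriction of that of $B$, so $A$ is a unital subalgebra of $B$ and $H\ot A$ a subalgebra of $H\ot B$; since $\lambda_B$ is an algebra map and $\lambda_A$ is simply its corestriction, $\lambda_A$ is an algebra map with no further work. This is exactly where the weak Hopf argument is lighter than the quasi-Hopf one, in which the coinvariants carry a twisted product and one is forced to pass through the converse Proposition \ref{converseYD}. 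It remains to verify condition (\ref{NV}) for $\lambda_A$; but $1_A=1_B$ and $\lambda_A(1_A)=\lambda_B(1_B)$, so applying $\varepsilon_s\ot id$ and using (\ref{NV}) for the comodule algebra $B$ gives $(\varepsilon_s\ot id_A)(\lambda_A(1_A))=(\varepsilon_s\ot id_B)(\lambda_B(1_B))=\lambda_B(1_B)=\lambda_A(1_A)$. Hence $A$ is a left $H$-comodule algebra, and Theorem \ref{weakYD} makes $A\#H$ an $H$-bicomodule algebra whose left coaction is $\lambda_{A\#H}(a\#h)=a^{(-1)}h_1\ot(a^{(0)}\#h_2)$.

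For the second half I would show that $\phi$ is an isomorphism of $H$-bicomodule algebras. We already know $\phi$ is an isomorphism of right $H$-comodule algebras, hence an algebra isomorphism intertwining the right coactions, so the only point left is that it intertwines the left coactions, $\lambda_B\circ\phi=(id_H\ot\phi)\circ\lambda_{A\#H}$. This is precisely the content of Proposition \ref{weakstruct4corners}(ii): under the identification $V=A$, $M=B$, the map $\phi$ is the isomorphism $\nu:A\ot_{H_t}H\rightarrow B$ in $_H^H{\mathcal M}_H^H$, hence intertwines the left coactions, and the left coaction $\lambda_{V\ot_{H_t}H}(v\ot h)=v^{(-1)}h_1\ot(v^{(0)}\ot h_2)$ appearing there coincides termwise with the coaction $\lambda_{A\#H}$ produced in the first half. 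Combining the two halves yields that $\phi$ is an isomorphism of $H$-bicomodule algebras.

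I expect no single hard computation here: the real subtlety is conceptual rather than computational, namely noticing that the left coaction on the coinvariants is the plain restriction of $\lambda_B$ — no projection $E$ intervenes, in contrast with the quasi-Hopf coaction $v\mapsto v_{<-1>}\ot E(v_{<0>})$ — so that multiplicativity is inherited for free and the whole proof reduces to bookkeeping plus the single unit identity (\ref{NV}). The one place deserving care is the well-definedness of $\lambda_A$ as a map into $H\ot A$, that is, that $\lambda_B$ preserves the coinvariants; this is legitimately the crux, but it is already packaged inside Proposition \ref{weakstruct4corners}(ii) and so may be invoked rather than reproved.
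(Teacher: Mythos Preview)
Your proposal is correct and follows essentially the same approach as the paper: both argue that $A$ inherits its left $H$-comodule algebra structure by restriction from $B$ (same multiplication, same unit, coaction the corestriction of $\lambda_B$), and both reduce the remaining claim to showing that $\phi$ intertwines the left coactions. The only cosmetic difference is that for this last step you invoke Proposition \ref{weakstruct4corners}(ii), whereas the paper does the one-line direct computation using multiplicativity of $\lambda_B$ together with $\lambda_B(v(h))=h_1\ot v(h_2)$; the two are trivially equivalent.
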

\begin{proof}
$A$ is a left $H$-comodule algebra because its multiplication and left $H$-comodule structure are 
the restrictions of the ones of $B$, the unit of $A$ is the same as the unit of $B$ and $B$ is a 
left $H$-comodule algebra. So the only thing left to prove is that the map $\phi $ intertwines 
the left $H$-coactions on $A\# H$ and $B$, and this follows by a straightforward 
computation using the fact that $\lambda _B(v(h))=h_1\ot v(h_2)$, for all $h\in H$.  
\end{proof}
\section{Braided bicomodule algebras}\label{secbraided}
\setcounter{equation}{0}
${\;\;\;}$
In this section, $(\C,\otimes,I,a,l,r,\phi)$ will denote a braided monoidal category. General references on 
(braided) category theory are \cite{joyalstreet}, \cite{maclane}. In view of Mac Lane's coherence theorem, 
we can (and will) assume, without loss of generality, that $\C$ is a strict monoidal category, i.e. 
the associativity and unity constraints are given by identity. Furthermore, we will freely make use of 
graphical calculus. First, the braiding and its inverse are denoted by:
$$
\phi_{M,N} \,
=
\gbeg{2}{3}
\got{1}{M}\got{1}{N}\gnl
\gbr\gnl
\gob{1}{N}\gob{1}{M}
\gend
\quad
\text{and}
\quad
\phi\inv_{M,N} \,
=
\gbeg{2}{3}
\got{1}{N}\got{1}{M}\gnl
\gibr\gnl
\gob{1}{M}\gob{1}{N}
\gend
$$

For an algebra $A$ and a coalgebra $C$ in $\C$, we denote the multiplication and unit of $A$, 
and the comultiplication and counit of $C$ by:
$$
\nabla_A \,
=
\gbeg{2}{3}
\got{1}{A}\got{1}{A}\gnl
\gmu\gnl
\gob{2}{A}
\gend
\quad , \quad
\eta_A \,
=
\gbeg{1}{3}
\gvac{1}\gnl
\gu{1}\gnl
\gob{1}{A}
\gend
\quad
\text{and}
\quad
\Delta_C \,
=
\gbeg{2}{3}
\got{2}{C}\gnl
\gcmu\gnl
\gob{1}{C}\gob{1}{C}
\gend 
\quad , \quad
\varepsilon_C \,
=
\gbeg{1}{3}
\got{1}{C}\gnl
\gcu{1}\gnl
\gnl
\gend
$$

A bialgebra $(H,\nabla,\eta,\Delta,\varepsilon)$ in a braided monoidal category $\C$ is simultaneously an 
algebra $(H,\nabla,\eta)$ and a coalgebra $(H,\Delta,\varepsilon)$ such that $\Delta$ and $\varepsilon$ are 
algebra morphisms (e.g. \cite{majidbraided}). Such an $H$ is also called a braided bialgebra. 
Graphically, $H$ satisfies:
\begin{align}\label{eqbialgebra}
\gbeg{2}{4}
\got{1}{H}\got{1}{H}\gnl
\gmu\gnl
\gcmu\gnl
\gob{1}{H}\gob{1}{H}
\gend
=
\gbeg{4}{5}
\got{2}{H}\got{2}{H}\gnl
\gcmu\gcmu\gnl
\gcl{1}\gbr\gcl{1}\gnl
\gmu\gmu\gnl
\gob{2}{H}\gob{2}{H}
\gend
\quad
\text{and}
\quad
\gbeg{2}{3}
\got{1}{H}\got{1}{H}\gnl
\gmu\gnl
\gcu{2}
\gend
=
\gbeg{2}{3}
\got{1}{H}\got{1}{H}\gnl
\gcu{1}
\gcu{1}
\gend
\end{align}

A braided Hopf algebra $H$ is a braided bialgebra together with a morphism $S : H \rightarrow H$ in $\C$, 
called the antipode, satisfying:
\begin{align}
\nabla \circ (S \otimes H) \circ \Delta = \nabla \circ (H \otimes S) \circ \Delta = \eta \circ \varepsilon \label{eqantipodedef}
\end{align}

We denote the antipode $S$ (and its inverse, if it exists) by:
$$
S \,
=
\gbeg{1}{5}
\got{1}{H}\gnl
\gcl{1}\gnl
\gmp{+}\gnl
\gcl{1}\gnl
\gob{1}{H}
\gend
\quad
\text{and}
\quad
S\inv \,
=
\gbeg{1}{5}
\got{1}{H}\gnl
\gcl{1}\gnl
\gmp{-}\gnl
\gcl{1}\gnl
\gob{1}{H}
\gend
$$

Using graphical calculus, \eqref{eqantipodedef} obtains the following form:
\begin{align}\label{eqantipode}
\gbeg{2}{5}
\got{2}{H}\gnl
\gcmu\gnl
\gmp{+}\gcl{1}\gnl
\gmu\gnl
\gob{2}{H}
\gend
=
\gbeg{2}{5}
\got{2}{H}\gnl
\gcmu\gnl
\gcl{1}\gmp{+}\gnl
\gmu\gnl
\gob{2}{H}
\gend
=
\gbeg{1}{4}
\got{1}{H}\gnl
\gcu{1}\gnl
\gu{1}\gnl
\gob{1}{H}
\gend
\end{align}	

Let $A$ be an algebra in $\C$; for a left $A$-module $M$ in $\C$ respectively a right $A$-module 
$M$ in $\C$, we use the following graphical notation:
$$
\mu^- \,
=
\gbeg{2}{3}
\got{1}{A}\got{1}{M}\gnl
\glm\gnl
\gvac{1}\gob{1}{M}
\gend
\quad
\text{resp.}
\quad
\mu^+ \,
=
\gbeg{2}{3}
\got{1}{M}\got{1}{A}\gnl
\grm\gnl
\gob{1}{M}
\gend
$$

In the sequel, $H$ will denote a braided Hopf algebra with bijective antipode. 
The subcategory of left $H$-modules in $\C$, denoted by ${}_H\C$, is monoidal. 
If $M, N \in {}_H\C$, then $M \otimes N \in {}_H\C$ via the diagonal action:
\begin{align}\label{eqdiagonalaction}
\gbeg{3}{4}
\got{1}{H}\gvac{1}\got{1}{M\otimes N}\gnl
\gcn{2}{1}{1}{3}\gcl{1}\gnl
\gvac{1}\glm\gnl
\gvac{2}\gob{1}{M\otimes N}
\gend
=
\gbeg{4}{5}
\got{2}{H}\got{1}{M}\got{1}{N}\gnl
\gcmu\gcl{1}\gcl{1}\gnl
\gcl{1}\gbr\gcl{1}\gnl
\glm\glm\gnl
\gvac{1}\gob{1}{M}\gvac{1}\gob{1}{N}
\gend
\end{align}
Similarly, one can consider the (monoidal) category of right $H$-modules $\C_H$ and the (monoidal) 
category of $H$-bimodules ${}_H\C_H$. 

An object $A$ in $\C$ is said to be an $H$-module algebra if $A$ is an $H$-module and an algebra in $\C$, 
in such a way that its multiplication and unit are $H$-linear, i.e., $A$ is an algebra in the category ${}_H\C$. 
Graphically, we have:
\begin{align}\label{eqmodulealgebra}
\gbeg{3}{5}
\got{1}{H}\got{1}{A}\gvac{1}\got{1}{A}\gnl
\gcl{1}\gwmu{3}\gnl
\gcn{2}{1}{1}{3}\gcl{1}\gnl
\gvac{1}\glm\gnl
\gvac{2}\gob{1}{A}
\gend
=
\gbeg{4}{6}
\got{2}{H}\got{1}{A}\got{1}{A}\gnl
\gcmu\gcl{1}\gcl{1}\gnl
\gcl{1}\gbr\gcl{1}\gnl
\glm\glm\gnl
\gvac{1}\gwmu{3}\gnl
\gvac{2}\gob{1}{A}
\gend
\quad
\text{and}
\quad
\gbeg{2}{4}
\got{1}{H}\gnl
\gcl{1}\gu{1}\gnl
\glm\gnl
\gvac{1}\gob{1}{A}
\gend
=
\gbeg{2}{4}
\got{1}{H}\gnl
\gcl{1}\gu{1}\gnl
\gcu{1}\gcl{1}\gnl
\gvac{1}\gob{1}{A}
\gend
\end{align}

For an $H$-module algebra $A$, one can define the smash product $A \# H$ in $\C$. As an object, 
$A \# H = A\otimes H$, while the multiplication is given by:
$$
\nabla_{A\# H} =
\gbeg{5}{6}
\got{1}{A}\got{2}{H}\got{1}{A}\got{1}{H}\gnl
\gcl{1}\gcmu\gcl{1}\gcl{1}\gnl
\gcl{1}\gcl{1}\gbr\gcl{1}\gnl
\gcl{1}\glm\gcl{1}\gcl{1}\gnl
\gwmu{3}\gmu\gnl
\gob{3}{A}\gob{2}{H}
\gend
$$
The unit is given by $\eta_A \otimes \eta_H$. It is well-known (e.g. \cite{majidcross}) that in this way 
$A \# H$ becomes 
an algebra in $\C$.

For a left or respectively right  $H$-comodule $N$ in $\C$ we denote its $H$-comodule structure by:
$$
\lambda \,
=
\gbeg{2}{3}
\gvac{1}\got{1}{N}\gnl
\glcm\gnl
\gob{1}{H}\gob{1}{N}
\gend
\quad
\text{resp.}
\quad
\rho \,
=
\gbeg{2}{3}
\got{1}{N}\gnl
\grcm\gnl
\gob{1}{N}\gob{1}{H}
\gend
$$

Let ${}^H\C$ denote the category of left $H$-comodules in $\C$. Similar to \eqref{eqmodulealgebra}, 
now using the diagonal coaction, we obtain that the category ${}^H\C$ is monoidal. 
\begin{align}\label{eqdiagonalcoaction}
\gbeg{3}{4}
\gvac{2}\got{1}{M\otimes N}\gnl
\gvac{1}\glcm\gnl
\gcn{2}{1}{3}{1}\gcl{1}\gnl
\gob{1}{H}\gvac{1}\gob{1}{M\otimes N}
\gend
=
\gbeg{4}{5}
\gvac{1}\got{1}{M}\gvac{1}\got{1}{N}\gnl
\glcm\glcm\gnl
\gcl{1}\gbr\gcl{1}\gnl
\gmu\gcl{1}\gcl{1}\gnl
\gob{2}{H}\gob{1}{M}\gob{1}{N}
\gend
\end{align}

Thus we can consider algebras in ${}^H\C$, which we will call left $H$-comodule algebras. 
In other words, $A$ is a left $H$-comodule algebra if it satisfies:
\begin{align}\label{eqleftcomodulealgebra}
\gbeg{3}{4}
\got{1}{A}\gvac{1}\got{1}{A}\gnl
\gwmu{3}\gnl
\glcm\gnl
\gob{1}{H}\gob{1}{A}
\gend
=
\gbeg{4}{5}
\gvac{1}\got{1}{A}\gvac{1}\got{1}{A}\gnl
\glcm\glcm\gnl
\gcl{1}\gbr\gcl{1}\gnl
\gmu\gmu\gnl
\gob{2}{H}\gob{2}{A}
\gend
\quad
\text{and}
\quad
\gbeg{2}{3}
\gvac{1}\gu{1}\gnl
\glcm\gnl
\gob{1}{H}\gob{1}{A}
\gend
=
\gbeg{2}{3}
\gu{1}\gu{1}\gnl
\gcl{1}\gcl{1}\gnl
\gob{1}{H}\gob{1}{A}
\gend
\end{align}
Similarly, we can consider the category of right comodules $\C^H$ as well as the notion of a 
right $H$-comodule algebra. Specifically, a right $H$-comodule algebra $A$ is an algebra in 
$\C^H$ and satisfies:
\begin{align}\label{eqrightcomodulealgebra}
\gbeg{3}{4}
\got{1}{A}\gvac{1}\got{1}{A}\gnl
\gwmu{3}\gnl
\gvac{1}\grcm\gnl
\gvac{1}\gob{1}{A}\gob{1}{H}
\gend
=
\gbeg{4}{5}
\got{1}{A}\gvac{1}\got{1}{A}\gnl
\grcm\grcm\gnl
\gcl{1}\gbr\gcl{1}\gnl
\gmu\gmu\gnl
\gob{2}{A}\gob{2}{H}
\gend
\quad
\text{and}
\quad
\gbeg{2}{3}
\gu{1}\gnl
\grcm\gnl
\gob{1}{A}\gob{1}{H}
\gend
=
\gbeg{2}{3}
\gu{1}\gu{1}\gnl
\gcl{1}\gcl{1}\gnl
\gob{1}{A}\gob{1}{H}
\gend
\end{align}

An $H$-bicomodule algebra is a left and right $H$-comodule algebra satisfying the 
additional bicomodule relation:
$$
\gbeg{3}{4}
\gvac{1}\got{1}{A}\gnl
\glcm\gnl
\gcl{1}\grcm\gnl
\gob{1}{H}\gob{1}{A}\gob{1}{H}
\gend
=
\gbeg{3}{4}
\gvac{1}\got{1}{A}\gnl
\gvac{1}\grcm\gnl
\glcm\gcl{1}\gnl
\gob{1}{H}\gob{1}{A}\gob{1}{H}
\gend
$$

Let us recall the definition of braided Yetter-Drinfeld modules, also called crossed modules, 
as introduced by Bespalov in \cite{bespalov}.
\begin{definition}
A braided left-left Yetter-Drinfeld module $(M,\mu^-,\lambda)$ over $H$ in the category $\C$ is 
simultaneously a left $H$-module and a left $H$-comodule in $\C$ satisfying the following 
compatibility relation:
\begin{align}\label{eqyd}
\gbeg{4}{5}
\got{2}{H}\gvac{1}\got{1}{M}\gnl
\gcmu\glcm\gnl
\gcl{1}\gbr\gcl{1}\gnl
\gmu\glm\gnl
\gob{2}{H}\gvac{1}\gob{1}{M}
\gend
=
\gbeg{3}{8}
\got{2}{H}\got{1}{M}\gnl
\gcmu\gcl{1}\gnl
\gcl{1}\gbr\gnl
\glm\gcl{1}\gnl
\glcm\gcl{1}\gnl
\gcl{1}\gbr\gnl
\gmu\gcl{1}\gnl
\gob{2}{H}\gob{1}{M}
\gend
\end{align}
Denote by $\ydc$ the category of left-left Yetter-Drinfeld modules in $\C$ (the morphisms are left 
$H$-module and left $H$-comodule morphisms in $\C$).
\end{definition}

Let $M,N \in \ydc$; using the diagonal action \eqref{eqdiagonalaction} and the diagonal coaction \eqref{eqdiagonalcoaction}, $M \otimes N$ becomes a left-left Yetter-Drinfeld module as well. 
Hence the category $\ydc$ is monoidal. It is also (pre) braided, the braiding and its inverse are given by:
$$
c_{M,N} =
\gbeg{3}{5}
\gvac{1}\got{1}{M}\got{1}{N}\gnl
\glcm\gcl{1}\gnl
\gcl{1}\gbr\gnl
\glm\gcl{1}\gnl
\gvac{1}\gob{1}{N}\gob{1}{M}
\gend
\quad
\text{and}
\quad
c_{M,N}\inv =
\gbeg{3}{7}
\got{1}{N}\gvac{1}\got{1}{M}\gnl
\gcl{1}\glcm\gnl
\gcl{1}\gibr\gnl
\gibr\gmp{-}\gnl
\gcl{1}\gibr\gnl
\gcl{1}\glm\gnl
\gob{1}{M}\gvac{1}\gob{1}{N}
\gend
$$

An algebra $A$ in $\ydc$ is called a (braided) Yetter-Drinfeld module algebra. Equivalently, 
$A\in \ydc$ is a Yetter-Drinfeld module algebra if it is a left $H$-module algebra and a left $H$-comodule algebra.
\begin{proposition}\label{propsmashbicomodule}
Let $(\C,\otimes,\phi)$ be a braided monoidal category and $H \in \C$ a braided Hopf algebra. 
Assume that $A$ is a left-left Yetter-Drinfeld module algebra. Then the smash product algebra $A \# H$ is an 
$H$-bicomodule algebra in the category $\C$, the structures being given by:
$$
\lambda_{A \# H} =
\gbeg{4}{5}
\gvac{1}\got{3}{A \; \# \; H}\gnl
\glcm\gcmu\gnl
\gcl{1}\gbr\gcl{1}\gnl
\gmu\gcl{1}\gcl{1}\gnl
\gob{2}{H}\gob{2}{A \# H}
\gend
\quad
\text{and}
\quad
\rho_{A \# H} =
\gbeg{3}{3}
\got{3}{A \; \# \; H}\gnl
\gcl{1}\gcmu\gnl
\gob{2}{A \# H}\gob{1}{H}
\gend
$$
Moreover, the morphism $\eta_A \otimes H : H \rightarrow A \# H$ is a morphism of 
$H$-bicomodule algebras.
\end{proposition}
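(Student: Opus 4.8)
The plan is to verify directly, by graphical calculus, each defining condition of an $H$-bicomodule algebra for the pair $(\lambda_{A\#H},\rho_{A\#H})$, and then to check that $\eta_A\ot H$ respects every structure. I would organize the work into four parts: the right comodule algebra axioms \eqref{eqrightcomodulealgebra}, the left comodule algebra axioms (the comodule conditions together with \eqref{eqleftcomodulealgebra}), the bicomodule compatibility, and the morphism claim. Throughout, the only external inputs are that $A$ is a left $H$-module algebra \eqref{eqmodulealgebra}, a left $H$-comodule algebra \eqref{eqleftcomodulealgebra}, and a Yetter-Drinfeld module \eqref{eqyd}, together with the bialgebra axioms \eqref{eqbialgebra} for $H$ and the naturality and hexagon axioms of the braiding $\phi$ of $\C$.

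First, the right coaction $\rho_{A\#H}$ is simply $\mathrm{id}_A\ot\Delta$ applied to $A\ot H=A\#H$, so the right comodule algebra axioms \eqref{eqrightcomodulealgebra} are routine: counitality and coassociativity are inherited verbatim from the counit and coassociativity of $\Delta$, while multiplicativity follows by reading off the smash product multiplication $\nabla_{A\#H}$ and using that $\Delta$ is an algebra morphism \eqref{eqbialgebra}, together with naturality of the braiding to reorganize the two $H$-tensorands. None of these steps uses the module or comodule structure of $A$ in an essential way.

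Second, and this is the substantial part, I would check that $\lambda_{A\#H}$ makes $A\#H$ a left $H$-comodule algebra. The comodule axioms (counitality and coassociativity of $\lambda_{A\#H}$) follow by combining the comodule counitality and coassociativity of $A$ with those of $H$, sliding the intervening braiding $\phi_{A,H}$ through by naturality; the algebra structure plays no role here. The core obstacle is \textbf{multiplicativity} of $\lambda_{A\#H}$ as a map into the braided tensor-product algebra $H\ot(A\#H)$, i.e.\ the first diagram of \eqref{eqleftcomodulealgebra}. Here one expands $\lambda_{A\#H}\circ\nabla_{A\#H}$ and the rival composite, and the essential move is to transport the left $H$-coaction on $A$ past the diagonal $H$-action and the crossings between them; this is governed precisely by the Yetter-Drinfeld compatibility \eqref{eqyd}. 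After applying it, one finishes using that $A$ is a left $H$-module algebra \eqref{eqmodulealgebra}, that $A$ is a left $H$-comodule algebra \eqref{eqleftcomodulealgebra}, and that $\Delta$ is multiplicative \eqref{eqbialgebra}. I expect this graphical reduction---pulling the coaction boxes through the braid crossings and matching crossings on both sides via the hexagon axioms---to be the delicate calculation. Unitality of $\lambda_{A\#H}$ is easy: it reduces to the comodule-algebra unit condition for $A$ (the second diagram of \eqref{eqleftcomodulealgebra}) together with $\Delta(\eta_H)=\eta_H\ot\eta_H$.

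Third, the bicomodule compatibility relating $\lambda_{A\#H}$ and $\rho_{A\#H}$ follows more easily: since $\rho_{A\#H}$ only comultiplies the $H$-tensorand and leaves $A$ untouched, the two composites differ only by the placement of a single comultiplication of $H$ relative to the braiding produced by $\lambda_{A\#H}$, and coassociativity of $\Delta$ \eqref{eqbialgebra} together with naturality of $\phi$ identifies them. Finally, for the morphism $\eta_A\ot H\colon H\to A\#H$: it is an algebra map because inserting $\eta_A$ into $\nabla_{A\#H}$ collapses it to $\nabla_H$, using that the $H$-action on $1_A$ factors through the counit (the second relation of \eqref{eqmodulealgebra}); it intertwines the right coactions immediately, as both sides reduce to $\Delta$ on $H$; and it intertwines the left coactions because the comodule-algebra unit condition for $A$ gives $\lambda\circ\eta_A=\eta_H\ot\eta_A$, which trivializes the coaction on the $A=1_A$ strand and leaves exactly $\Delta$ on $H$, matching $\lambda_{A\#H}$ evaluated on $1_A\#h$.
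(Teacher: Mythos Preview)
Your proposal is correct and follows essentially the same route as the paper: both treat the right comodule algebra axioms, the bicomodule compatibility, and the morphism claim as routine, and both single out the multiplicativity of $\lambda_{A\#H}$ as the crucial calculation, with the Yetter-Drinfeld compatibility \eqref{eqyd} as the decisive ingredient (together with \eqref{eqbialgebra}, \eqref{eqleftcomodulealgebra}, and naturality/coassociativity). The paper additionally spells out the graphical verification of left coassociativity of $\lambda_{A\#H}$, but your outline of that step is accurate.
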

\begin{proof} Obviously, by the coassociativity of $\Delta$, $A \# H$ is a right $H$-comodule. 
$A \# H$ is easily seen to be a left $H$-comodule as well, since
\begin{align*}
&(\Delta \otimes A \# H) \circ \lambda_{A \# H} =	\\
&\gbeg{4}{7}
\gvac{1}\got{1}{A}\got{2}{H}\gnl
\glcm\gcmu\gnl
\gcl{1}\gbr\gcl{1}\gnl
\gmu\gcl{1}\gcl{1}\gnl
\gcmu\gcl{1}\gcl{1}\gnl
\gob{1}{H}\gob{1}{H}\gob{1}{A}\gob{1}{H}
\gend
\overset{\eqref{eqbialgebra}}
=
\gbeg{6}{8}
\gvac{2}\got{1}{A}\gvac{1}\got{1}{H}\gnl
\gvac{1}\glcm\gwcm{3}\gnl
\gcn{2}{1}{3}{2}\gbr\gvac{1}\gcl{1}\gnl
\gcn{2}{1}{2}{2}\gcn{1}{1}{1}{2}\gcn{2}{1}{1}{3}\gcl{1}\gnl
\gcmu\gcmu\gcl{1}\gcl{1}\gnl
\gcl{1}\gbr\gcl{1}\gcl{1}\gcl{1}\gnl
\gmu\gmu\gcl{1}\gcl{1}\gnl
\gob{2}{H}\gob{2}{H}\gob{1}{A}\gob{1}{H}
\gend
\overset{comod.}{\underset{nat.}{=}}
\gbeg{6}{7}
\gvac{2}\got{1}{A}\gvac{1}\got{2}{H}\gnl
\gvac{1}\glcm\gwcmh{4}{2}{5}\gnl
\gcn{1}{1}{3}{1}\glcm\gcmu\gcl{1}\gnl
\gcl{1}\gcl{1}\gbr\gcl{1}\gcl{1}\gnl
\gcl{1}\gbr\gbr\gcl{1}\gnl
\gmu\gmu\gcl{1}\gcl{1}\gnl
\gob{2}{H}\gob{2}{H}\gob{1}{A}\gob{1}{H}
\gend
\overset{coasso.}{\underset{nat.}{=}}
\gbeg{6}{8}
\gvac{1}\got{1}{A}\gvac{1}\got{1}{H}\gnl
\glcm\gwcm{3}\gnl
\gcl{1}\gbr\gvac{1}\gcl{1}\gnl
\gmu\gcn{2}{1}{1}{3}\gcn{1}{1}{1}{2}\gnl
\gcn{1}{1}{2}{2}\gvac{1}\glcm\gcmu\gnl
\gcn{1}{1}{2}{2}\gvac{1}\gcl{1}\gbr\gcl{1}\gnl
\gcn{1}{1}{2}{2}\gvac{1}\gmu\gcl{1}\gcl{1}\gnl
\gob{2}{H}\gob{2}{H}\gob{1}{A}\gob{1}{H}
\gend
\\
&=(H \otimes \lambda_{A \# H}) \circ \lambda_{A \# H}
\end{align*}
Again by coassociativity $A\#H$ becomes an $H$-bicomodule. As it is straightforward to verify that 
$A\# H$ becomes a right $H$-comodule algebra, the only thing that remains to be proved is the fact that 
$A \# H$ is a left $H$-comodule algebra. We verify:
\begin{align*}
&\lambda_{A \# H} \circ \nabla_{A \# H} \\
&=	
\gbeg{5}{9}
\got{1}{A}\got{2}{H}\got{1}{A}\got{1}{H}\gnl
\gcl{1}\gcmu\gcl{1}\gcl{1}\gnl
\gcl{1}\gcl{1}\gbr\gcl{1}\gnl
\gcl{1}\glm\gcl{1}\gcl{1}\gnl
\gwmu{3}\gmu\gnl
\glcm\gwcmh{4}{1}{5}\gnl
\gcl{1}\gbr\gvac{1}\gcl{1}\gnl
\gmu\gcl{1}\gvac{1}\gcl{1}\gnl
\gob{2}{A}\gob{1}{H}\gvac{1}\gob{1}{H}
\gend
\overset{\eqref{eqbialgebra}}{\underset{\eqref{eqleftcomodulealgebra}}{=}}
\gbeg{8}{11}
\gvac{1}\got{1}{A}\got{2}{H}\got{1}{A}\gvac{1}\got{1}{H}\gnl
\gvac{1}\gcl{1}\gcmu\gcl{1}\gvac{1}\gcl{1}\gnl
\gvac{1}\gcl{1}\gcl{1}\gbr\gvac{1}\gcl{1}\gnl
\gvac{1}\gcl{1}\glm\gcn{1}{1}{1}{2}\gvac{1}\gcn{1}{1}{1}{2}\gnl
\glcm\glcm\gcmu\gcmu\gnl
\gcl{1}\gbr\gcl{1}\gcl{1}\gbr\gcl{1}\gnl
\gmu\gmu\gmu\gmu\gnl
\gcn{2}{1}{2}{2}\gcn{2}{1}{2}{3}\gcn{2}{1}{2}{1}\gcn{2}{1}{2}{2}\gnl
\gcn{2}{1}{2}{2}\gvac{1}\gbr\gvac{1}\gcn{2}{1}{2}{2}\gnl
\gwmuh{4}{2}{7}\gcl{1}\gvac{1}\gcn{1}{1}{2}{2}\gnl
\gvac{1}\gob{2}{H}\gvac{1}\gob{1}{A}\gvac{1}\gob{2}{H}
\gend
\overset{coasso.}{\underset{nat.}{=}}
\gbeg{8}{15}
\gvac{1}\got{1}{A}\gvac{1}\got{1}{H}\gvac{1}\got{1}{A}\got{2}{H}\gnl
\gvac{1}\gcl{1}\gwcmh{3}{2}{5}\gcl{1}\gcn{1}{1}{2}{2}\gnl
\gvac{1}\gcl{1}\gcmu\gbr\gcmu\gnl
\gvac{1}\gcl{1}\gcl{1}\gbr\gbr\gcl{1}\gnl
\gvac{1}\gcl{1}\glm\gcl{1}\gcl{1}\gmu\gnl
\glcm\glcm\gcl{1}\gcl{1}\gcn{1}{1}{2}{2}\gnl
\gcl{1}\gcl{1}\gcl{1}\gbr\gcl{1}\gcn{1}{1}{2}{2}\gnl
\gcl{1}\gcl{1}\gmu\gcl{1}\gcl{1}\gcn{1}{1}{2}{2}\gnl
\gcl{1}\gcl{1}\gcn{2}{1}{2}{1}\gcl{1}\gcl{1}\gcn{1}{1}{2}{2}\gnl
\gcl{1}\gbr\gvac{1}\gcl{1}\gcl{1}\gcn{1}{1}{2}{2}\gnl
\gmu\gwmuh{3}{1}{5}\gcn{1}{1}{1}{1}\gcn{1}{1}{2}{2}\gnl
\gcn{2}{1}{2}{2}\gvac{1}\gcn{1}{1}{1}{1}\gcn{2}{1}{3}{1}\gcn{1}{1}{2}{2}\gnl
\gcn{2}{1}{2}{2}\gvac{1}\gbr\gvac{1}\gcn{1}{1}{2}{2}\gnl
\gwmuh{4}{2}{7}\gcl{1}\gvac{1}\gcn{1}{1}{2}{2}\gnl
\gob{4}{H}\gob{1}{A}\gvac{1}\gob{2}{H}
\gend
\\
&\overset{\eqref{eqyd}}{=}
\gbeg{9}{13}
\gvac{1}\got{1}{A}\gvac{1}\got{2}{H}\gvac{1}\got{1}{A}\got{2}{H}\gnl
\gvac{1}\gcl{1}\gwcmh{4}{2}{7}\gcl{1}\gcn{1}{1}{2}{2}\gnl
\gvac{1}\gcl{1}\gcn{2}{1}{2}{2}\gvac{1}\gbr\gcmu\gnl
\gvac{1}\gcl{1}\gcmu\glcm\gbr\gcl{1}\gnl
\gvac{1}\gcl{1}\gcl{1}\gbr\gcl{1}\gcl{1}\gmu\gnl
\glcm\gmu\glm\gcl{1}\gcn{1}{1}{2}{2}\gnl
\gcl{1}\gcl{1}\gcn{2}{1}{2}{1}\gvac{1}\gcl{1}\gcl{1}\gcn{1}{1}{2}{2}\gnl
\gcl{1}\gbr\gvac{2}\gcl{1}\gcl{1}\gcn{1}{1}{2}{2}\gnl
\gmu\gwmuh{4}{1}{7}\gcl{1}\gcn{1}{1}{2}{2}\gnl
\gcn{1}{1}{2}{2}\gvac{2}\gcn{2}{1}{2}{3}\gcn{2}{1}{3}{1}\gcn{1}{1}{2}{2}\gnl
\gcn{1}{1}{2}{2}\gvac{3}\gbr\gvac{1}\gcn{1}{1}{2}{2}\gnl
\gwmuh{5}{2}{9}\gcl{1}\gvac{1}\gcn{1}{1}{2}{2}\gnl
\gob{5}{H}\gob{1}{A}\gvac{1}\gob{2}{H}
\gend
\overset{(co)asso.}{\underset{nat.}{=}}
\gbeg{9}{12}
\gvac{1}\got{1}{A}\got{2}{H}\gvac{2}\got{1}{A}\got{2}{H}\gnl
\glcm\gwcmh{2}{1}{4}\gvac{1}\glcm\gcmu\gnl
\gcl{1}\gbr\gcmu\gcl{1}\gcl{1}\gcl{1}\gcl{1}\gnl
\gmu\gcl{1}\gcl{1}\gbr\gcl{1}\gcl{1}\gcl{1}\gnl
\gcn{2}{1}{2}{2}\gcl{1}\gbr\gbr\gcl{1}\gcl{1}\gnl
\gcn{2}{1}{2}{2}\gbr\glm\gbr\gcl{1}\gnl
\gcn{2}{1}{2}{2}\gcl{1}\gwmu{3}\gcl{1}\gmu\gnl
\gcn{2}{1}{2}{2}\gcl{1}\gvac{1}\gcn{2}{1}{1}{3}\gcl{1}\gcn{1}{1}{2}{2}\gnl
\gcn{2}{1}{2}{2}\gcl{1}\gvac{2}\gbr\gcn{1}{1}{2}{2}\gnl
\gcn{2}{1}{2}{2}\gwmuh{4}{1}{7}\gcl{1}\gcn{1}{1}{2}{2}\gnl
\gwmuh{5}{2}{8}\gvac{1}\gcl{1}\gcn{1}{1}{2}{2}\gnl
\gob{5}{H}\gvac{1}\gob{1}{A}\gob{2}{H}
\gend
\overset{nat.}{=}
\gbeg{8}{12}
\gvac{1}\got{1}{A}\got{2}{H}\gvac{1}\got{1}{A}\got{2}{H}\gnl
\glcm\gcmu\glcm\gcmu\gnl
\gcl{1}\gbr\gcl{1}\gcl{1}\gbr\gcl{1}\gnl
\gmu\gcl{1}\gcl{1}\gmu\gcl{1}\gcl{1}\gnl
\gcn{2}{1}{2}{2}\gcl{1}\gcl{1}\gcn{2}{1}{2}{1}\gcl{1}\gcl{1}\gnl
\gcn{2}{1}{2}{2}\gcl{1}\gbr\gvac{1}\gcl{1}\gcl{1}\gnl
\gcn{2}{1}{2}{2}\gbr\gcn{2}{1}{1}{2}\gcl{1}\gcl{1}\gnl
\gwmuh{3}{2}{5}\gcl{1}\gcmu\gcl{1}\gcl{1}\gnl
\gvac{1}\gcl{1}\gvac{1}\gcl{1}\gcl{1}\gbr\gcl{1}\gnl
\gvac{1}\gcl{1}\gvac{1}\gcl{1}\glm\gcl{1}\gcl{1}\gnl
\gvac{1}\gcl{1}\gvac{1}\gwmuh{3}{1}{5}\gmu\gnl
\gvac{1}\gob{1}{H}\gvac{1}\gob{3}{A}\gob{2}{H}
\gend
\\
&=\nabla_{A \# H} \circ (\lambda_{A \# H}\otimes \lambda_{A \# H})
\end{align*}
The last statement in the proposition is easily verified and is left to the reader.
\end{proof}

The aim of the remaining part of this section is to prove a converse of Proposition \ref{propsmashbicomodule}. 
For this we will need the concept of split idempotents.
\begin{definition}
An idempotent $e: X \rightarrow X$ in the category $\C$ is said to be split if there exists an object 
$X_e \in \C$ and morphisms $i_e : X_e \rightarrow X$ and $p_e : X \rightarrow X_e$ such that 
$p_e \circ i_e = id_{X_e}$ and $i_e \circ p_e = e$. We say that $\C$ admits split idempotents if 
any idempotent in $\C$ is split.
\end{definition}
Note that any category $\C$ can be embedded in a category $\hat{\C}$, also denoted $Split(\C)$, 
which admits split idempotents. $\hat{\C}$ is called the Karoubi enveloping category of $\C$. If 
$\C$ is (braided) monoidal, so is $\hat{\C}$ (cf. \cite{lyubashenko}).

From now on, we will assume that the braided monoidal category $\C$ admits split idempotents. 
Moreover, splittings are chosen as described in \cite[Appendix A]{bespalovdrabant}.

Let us recall the definition of (two-fold) Hopf (bi-)modules from \cite{bespalovdrabant}.
\begin{definition}
$B$ is called a right-right $H$-Hopf module in $\C$ if $B$ is a right $H$-module and a right 
$H$-comodule such that:
$$
\gbeg{2}{5}
\got{1}{B}\got{1}{H}\gnl
\grm\gnl
\gcl{1}\gnl
\grcm\gnl
\gob{1}{B}\gob{1}{H}
\gend
=
\gbeg{4}{5}
\got{1}{B}\gvac{1}\got{2}{H}\gnl
\grcm\gcmu\gnl
\gcl{1}\gbr\gcl{1}\gnl
\grm\gmu\gnl
\gob{1}{B}\gvac{1}\gob{2}{H}
\gend
$$

In other words, the right $B$-action is $H$-colinear. Similarly, one can define left-right $H$-Hopf modules, i.e. 
$B$ then is a left $H$-module in the category $\C^H$.

A two-fold Hopf module $B$ is an object in $\C$ which is at the same time a left-right and a right-right 
$H$-Hopf module, or equivalently, $B$ is an $H$-bimodule in $\C^H$. The category of two-fold Hopf 
modules together with $H$-bilinear and $H$-colinear morphisms is denoted by ${}_H\C_H^H$.

Finally, $B$ is said to be an $H$-Hopf bimodule if $B$ is an $H$-bimodule in the monoidal category 
${}^H\C^H$. Let $\CC$ denote the category of $H$-Hopf bimodules together with $H$-bilinear 
$H$-bicolinear morphisms.
\end{definition}

If $B$ is a right $H$-Hopf module, one can consider the morphism $E : B \rightarrow B$ defined by 
$E = \mu^+ \circ (B \otimes S) \circ \rho$, that is:
\begin{align}\label{defe}
E =
\gbeg{2}{5}
\got{1}{B}\gnl
\grcm\gnl
\gcl{1}\gmp{+}\gnl
\grm\gnl
\gob{1}{B}
\gend
\end{align}
Then, by \cite[Proposition 3.2.1]{bespalovdrabant}, $E$ is an idempotent. By assumption, 
there exist an object $B_0 \in \C$ and morphisms $i : B_0 \rightarrow B$ and 
$p : B \rightarrow B_0$ such that:
\begin{align}\label{eqip}
  \begin{aligned}
  &p \circ i = id_{B_0}  \\
  &i \circ p = E
  \end{aligned}
\end{align}
In addition, it is shown in \cite{bespalovdrabant} that $(B_0,i)$ is the equalizer of $\rho$ and 
$B\otimes \eta_H$. In other words, $B_0$ is equal to the coinvariants subobject $B^{coH}$. 
Using graphical calculus we obtain:
\begin{align}\label{iequalizer}
\gbeg{2}{4}
\got{1}{B_0}\gnl
\gbmp{i}\gnl
\grcm\gnl
\gob{1}{B}\gob{1}{H}
\gend
=
\gbeg{2}{4}
\got{1}{B_0}\gnl
\gbmp{i}\gnl
\gcl{1}\gu{1}\gnl
\gob{1}{B}\gob{1}{H}
\gend
\end{align}
$(B_0,p)$ is at the same time also equal to the coequalizer of $\mu^+$ and $B \otimes \varepsilon_H$, 
that is $B_0$ is the object of $H$-invariants. In particular, we have:
\begin{align}\label{pcoequalizer}
\gbeg{3}{5}
\got{1}{B_0}\gvac{1}\got{1}{H}\gnl
\gcl{1}\gvac{1}\gbmp{v}\gnl
\gwmu{3}\gnl
\gvac{1}\gbmp{p}\gnl
\gvac{1}\gob{1}{B}
\gend
=
\gbeg{2}{5}
\got{1}{B_0}\got{1}{H}\gnl
\gcl{1}\gcl{1}\gnl
\gbmp{p}\gcu{1}\gnl
\gcl{1}\gnl
\gob{1}{B}
\gend
\end{align}

If $B$ is a two-fold Hopf module, one can consider the adjoint $H$-action on $B$:
\begin{align*}
ad =
\gbeg{3}{6}
\got{2}{H}\got{1}{B}\gnl
\gcmu\gcl{1}\gnl
\gcl{1}\gbr\gnl
\glm\gmp{+}\gnl
\gvac{1}\grm\gnl
\gvac{1}\gob{1}{B}
\gend
\end{align*}
By \cite[Proposition 3.6.2]{bespalovdrabant} we have:
\begin{align}\label{eqead}
  \begin{aligned}
E \circ ad &= E \circ \mu^- = ad \circ (H \otimes E)	\\
p \circ ad &= p \circ \mu^-
  \end{aligned}
\end{align}
This allows us to define a left $H$-module structure on $B_0$, say $ad_0$, as follows:
\begin{align}\label{eqbzero}
ad_0
=
\gbeg{2}{7}
\got{1}{H}\got{1}{B_0}\gnl
\gcl{1}\gbmp{i}\gnl
\gcl{1}\gcl{1}\gnl
\gnotc{ad}\glmpt\grmptb\gnl
\gvac{1}\gcl{1}\gnl
\gvac{1}\gbmp{p}\gnl
\gvac{1}\gob{1}{B_0}
\gend
=
\gbeg{3}{7}
\got{2}{H}\got{1}{B_0}\gnl
\gcmu\gbmp{i}\gnl
\gcl{1}\gbr\gnl
\glm\gmp{+}\gnl
\gvac{1}\grm\gnl
\gvac{1}\gbmp{p}\gnl
\gvac{1}\gob{1}{B_0}
\gend
\end{align}
By construction, we have:
\begin{align}\label{eqiad}
i \circ ad_0 = ad \circ (H \otimes i)
\end{align}

The following is a (partial) generalization of \cite[Proposition 1.2]{alvarezvilaboa}, where a stronger 
condition (existence of (co)equalizers in $\C$) is assumed.
\begin{proposition}\label{proprightcomod}
Let $(\C,\otimes,\phi)$ be a braided monoidal category and $H \in \C$ a braided Hopf algebra. Let 
$B$ be a right $H$-comodule algebra in $\C$ such that there exists an $H$-colinear algebra morphism 
$v : H \rightarrow B$. We can consider $B_0$ as above, which now is an $H$-module algebra. 
Furthermore $B \cong B_0 \# H$ as right $H$-comodule algebras.
\end{proposition}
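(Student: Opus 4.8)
The plan is to first recognize $B$ as a Hopf module for the inner structure coming from $v$, then feed it into the split-idempotent machinery already recorded, and finally verify that the resulting isomorphism of comodules is an isomorphism of comodule algebras. Since $v:H\rightarrow B$ is an algebra morphism, I would equip $B$ with the inner $H$-bimodule structure $\mu^-=\nabla_B\circ(v\otimes B)$ and $\mu^+=\nabla_B\circ(B\otimes v)$; together with the given right coaction $\rho$ this should make $B$ a two-fold Hopf module, i.e. an object of ${}_H\C_H^H$. The only nonformal point here is the $H$-colinearity of $\mu^{\pm}$, which follows because $\rho$ is multiplicative by \eqref{eqrightcomodulealgebra} and $v$ is $H$-colinear, so that $\rho\circ\mu^+=\rho\circ\nabla_B\circ(B\otimes v)$ factors through the diagonal coaction on $B\otimes H$, and symmetrically for $\mu^-$.

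Next I would invoke the machinery. The morphism $E$ of \eqref{defe} is idempotent and splits as in \eqref{eqip}, with $(B_0,i)=(B^{coH},i)$ the equalizer \eqref{iequalizer} and $(B_0,p)$ the coequalizer \eqref{pcoequalizer}. Because $\rho$ is multiplicative and the braiding with the unit is trivial, $B^{coH}$ is closed under $\nabla_B$ and contains $\eta_B$; hence $i$ is an algebra monomorphism and $B_0$ inherits an algebra structure $\nabla_{B_0}=p\circ\nabla_B\circ(i\otimes i)$, $\eta_{B_0}=p\circ\eta_B$. The adjoint action $\mathrm{ad}$ attached to the two-fold structure restricts to the $H$-action $\mathrm{ad}_0$ on $B_0$ via \eqref{eqbzero} and \eqref{eqiad}. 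Since the bimodule structure is inner, $\mathrm{ad}$ is a braided module-algebra action on $B$, and transporting along $i$ by means of \eqref{eqead} and \eqref{eqiad} shows that $B_0$ is an $H$-module algebra; we may therefore form the smash product $B_0\#H$.

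Now define $\phi=\nabla_B\circ(i\otimes v):B_0\#H\rightarrow B$, that is $\phi(b_0\#h)=i(b_0)\,v(h)$, with candidate inverse $\psi=(p\otimes H)\circ\rho:B\rightarrow B_0\#H$. I expect $\psi$ and $\phi$ to be mutually inverse, this being the fundamental theorem of Hopf modules for the right-right structure: $\psi\circ\phi=\mathrm{id}$ follows from the coequalizer identity \eqref{pcoequalizer} together with $p\circ i=\mathrm{id}$, while $\phi\circ\psi=\mathrm{id}$ reduces to $E(b_{(0)})\,v(b_{(1)})=b$, which is the antipode axiom \eqref{eqantipode} combined with the counit of $\rho$. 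Right $H$-colinearity of $\phi$ should be routine: using $\rho\circ i=i\otimes\eta_H$ from \eqref{iequalizer}, multiplicativity of $\rho$, and $\rho\circ v=(v\otimes H)\circ\Delta$, one gets $\rho\circ\phi=(\phi\otimes H)\circ\rho_{B_0\#H}$ with $\rho_{B_0\#H}(b_0\#h)=(b_0\#h_1)\otimes h_2$. Unitality of $\phi$ is immediate from $i(\eta_{B_0})=\eta_B=v(\eta_H)$.

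The one genuinely delicate step is the multiplicativity of $\phi$, i.e. $\phi\circ\nabla_{B_0\#H}=\nabla_B\circ(\phi\otimes\phi)$. Unwinding the braided smash-product multiplication and substituting $i\circ\mathrm{ad}_0=\mathrm{ad}\circ(H\otimes i)$ with $\mathrm{ad}(h\otimes b)=v(h_1)\,b\,v(S(h_2))$, the left-hand side takes the schematic form $i(b_0)\,v(h_1)\,i(b_0')\,v(S(h_2))\,v(h_3)\,v(h')$, whereas the right-hand side is $i(b_0)\,v(h)\,i(b_0')\,v(h')$; the two coincide once $v(S(h_2))\,v(h_3)=v(S(h_2)h_3)$ is collapsed to the unit by \eqref{eqantipode}. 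The main obstacle is executing this collapse inside the braided graphical calculus rather than elementwise: the $S$-leg and the multiplying $H$-leg are separated by a braiding with the $i(b_0')$ strand, so one must first use coassociativity \eqref{eqbialgebra} and naturality of the braiding to bring both legs onto a common coproduct, then apply the antipode axiom \eqref{eqantipode}, and finally observe that the residual braiding of the $i(b_0')$ strand past the resulting $\eta\circ\varepsilon$ cap is trivial. Keeping this braiding bookkeeping straight is the only subtle part; every other verification is formal.
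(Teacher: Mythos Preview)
Your proposal is correct and follows essentially the same route as the paper: endow $B$ with the inner bimodule structure via $v$ to make it a two-fold Hopf module, split the idempotent $E$ to get $B_0$, show $B_0$ is an $H$-module algebra via the restricted adjoint action, and then verify that $\omega=\nabla_B\circ(i\otimes v)$ (your $\phi$) is a right $H$-colinear algebra isomorphism with inverse $(p\otimes H)\circ\rho$. The paper organizes the multiplicativity check slightly differently, by first isolating the intermediate identity \eqref{eqvi} (essentially the braided form of $E(v(h)i(b_0))=v(h_1)\,i(b_0)\,v(S(h_2))$) and then collapsing with \eqref{eqantipode}, whereas you fold this into a single paragraph; but the content and the braiding bookkeeping you flag are the same.
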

\begin{proof}
$B$ becomes a two-fold $H$-Hopf module via
\begin{align}\label{bistwofold}
\gbeg{2}{3}
\got{1}{H}\got{1}{B}\gnl
\glm\gnl
\gvac{1}\gob{1}{B}
\gend
=
\gbeg{2}{4}
\got{1}{H}\got{1}{B}\gnl
\gbmp{v}\gcl{1}\gnl
\gmu\gnl
\gob{2}{B}
\gend
\quad
\text{and}
\quad
\gbeg{2}{3}
\got{1}{B}\got{1}{H}\gnl
\grm\gnl
\gob{1}{B}
\gend
=
\gbeg{2}{4}
\got{1}{B}\got{1}{H}\gnl
\gcl{1}\gbmp{v}\gnl
\gmu\gnl
\gob{2}{B}
\gend
\end{align}
For example:
$$
\gbeg{2}{5}
\got{1}{B}\got{1}{H}\gnl
\grm\gnl
\gcl{1}\gnl
\grcm\gnl
\gob{1}{B}\gob{1}{H}
\gend
\overset{\eqref{bistwofold}}=
\gbeg{3}{5}
\got{1}{B}\gvac{1}\got{1}{H}\gnl
\gcl{1}\gvac{1}\gbmp{v}\gnl
\gwmu{3}\gnl
\gvac{1}\grcm\gnl
\gvac{1}\gob{1}{B}\gob{1}{H}
\gend
\overset{\eqref{eqrightcomodulealgebra}}=
\gbeg{4}{6}
\got{1}{B}\gvac{1}\got{1}{H}\gnl
\gcl{1}\gvac{1}\gbmp{v}\gnl
\grcm\grcm\gnl
\gcl{1}\gbr\gcl{1}\gnl
\gmu\gmu\gnl
\gob{2}{B}\gob{2}{H}
\gend
=
\gbeg{4}{6}
\got{1}{B}\gvac{1}\got{2}{H}\gnl
\grcm\gcmu\gnl
\gcl{1}\gcl{1}\gbmp{v}\gcl{1}\gnl
\gcl{1}\gbr\gcl{1}\gnl
\gmu\gmu\gnl
\gob{2}{B}\gob{2}{H}
\gend
\overset{\eqref{bistwofold}}=
\gbeg{4}{5}
\got{1}{B}\gvac{1}\got{2}{H}\gnl
\grcm\gcmu\gnl
\gcl{1}\gbr\gcl{1}\gnl
\grm\gmu\gnl
\gob{1}{B}\gvac{1}\gob{2}{H}
\gend
$$
where the third equality follows by right $H$-colinearity of $v$. Next, observe that:
\begin{align}\label{eqbmodalg}
\gbeg{4}{6}
\got{2}{H}\got{1}{B}\got{1}{B}\gnl
\gcmu\gcl{1}\gcl{1}\gnl
\gcl{1}\gbr\gcl{1}\gnl
\gnotc{ad}\glmpt\grmptb\gnotc{ad}\glmpt\grmptb\gnl
\gvac{1}\gwmu{3}\gnl
\gvac{2}\gob{1}{B}
\gend
=
\gbeg{6}{11}
\gvac{1}\got{2}{H}\got{1}{B}\gvac{1}\got{1}{B}\gnl
\gvac{1}\gcmu\gcl{1}\gvac{1}\gcl{1}\gnl
\gcn{2}{1}{3}{2}\gbr\gvac{1}\gcl{1}\gnl
\gcn{2}{1}{2}{2}\gcl{1}\gcn{2}{1}{1}{2}\gcl{1}\gnl
\gcmu\gcl{1}\gcmu\gcl{1}\gnl
\gbmp{v}\gbr\gbmp{v}\gbr\gnl
\gmu\gmp{+}\gmu\gmp{+}\gnl
\gcn{2}{1}{2}{2}\gbmp{v}\gcn{2}{1}{2}{2}\gbmp{v}\gnl
\gwmuh{3}{2}{5}\gwmuh{3}{2}{5}\gnl
\gvac{1}\gwmuh{4}{1}{7}\gnl
\gvac{1}\gob{4}{B}
\gend
\overset{(co)asso.}{ \underset{nat.}{=}}
\gbeg{5}{10}
\got{2}{H}\got{1}{B}\gvac{1}\got{1}{B}\gnl
\gcmu\gcl{1}\gvac{1}\gcl{1}\gnl
\gcl{1}\gbr\gvac{1}\gcl{1}\gnl
\gbmp{v}\gcl{1}\gcn{2}{1}{1}{2}\gcl{1}\gnl
\gmu\gwcmh{2}{0}{3}\gcl{1}\gnl
\gcn{1}{1}{2}{1}\gcmu\gbr\gnl
\gcl{1}\gmp{+}\gcl{1}\gcl{1}\gmp{+}\gnl
\gcl{1}\gbmp{v}\gbmp{v}\gcl{1}\gbmp{v}\gnl
\gcl{1}\gmu\gmu\gnl
\gwmuh{3}{1}{4}\gcn{1}{1}{2}{2}\gnl
\gvac{1}\gwmuh{3}{1}{6}
\gend
\overset{\eqref{eqantipode}}{{=}}
\gbeg{5}{10}
\got{2}{H}\got{1}{B}\gvac{1}\got{1}{B}\gnl
\gcmu\gcl{1}\gvac{1}\gcl{1}\gnl
\gcl{1}\gbr\gvac{1}\gcl{1}\gnl
\gbmp{v}\gcl{1}\gcn{2}{1}{1}{2}\gcl{1}\gnl
\gmu\gwcmh{2}{1}{3}\gcl{1}\gnl
\gcn{1}{1}{2}{1}\gvac{1}\gcu{1}\gbr\gnl
\gcl{1}\gvac{1}\gvac{1}\gcl{1}\gmp{+}\gnl
\gcl{1}\gvac{1}\gu{1}\gcl{1}\gbmp{v}\gnl
\gcl{1}\gvac{1}\gbmp{v}\gmu\gnl
\gwmuh{3}{1}{5}\gcn{1}{1}{2}{2}\gnl
\gvac{1}\gwmuh{3}{1}{6}
\gend
\overset{asso.}{ \underset{nat.}{=}}
\gbeg{4}{7}
\got{2}{H}\got{1}{B}\got{1}{B}\gnl
\gcmu\gcl{1}\gcl{1}\gnl
\gcl{1}\gbr\gcl{1}\gnl
\gcl{1}\gcl{1}\gbr\gnl
\gcl{1}\gmu\gmp{+}\gnl
\gbmp{v}\gcn{2}{1}{2}{2}\gbmp{v}\gnl
\gwmuh{3}{1}{4}\gcl{1}\gnl
\gvac{1}\gwmuh{3}{1}{5}\gnl
\gend
=
\gbeg{3}{6}
\got{1}{H}\got{1}{B}\gvac{1}\got{1}{B}\gnl
\gcl{1}\gwmu{3}\gnl
\gcn{2}{1}{1}{3}\gcl{1}\gnl
\gvac{1}\gnotc{ad}\glmpt\grmptb\gnl
\gvac{2}\gcl{1}\gnl
\gvac{2}\gob{1}{B}
\gend
\end{align}
\\
Let $(B_0,i,p)$ be defined as above, with $H$-action on $B_0$ as in \eqref{eqbzero}. We verify that 
$B_0$ becomes an $H$-module algebra, hence we can consider the smash product algebra $B_0 \# H$. 
First, there is an algebra structure $\nabla_0 : B_0 \otimes B_0 \rightarrow B_0$ defined by 
$\nabla_0 = p \circ \nabla \circ (i\otimes i)$. Equivalently, using the fact that $(B_0,i)$ is the 
equalizer of $\rho$ and $B\otimes H$, $\nabla_0$ is uniquely defined by the relation: 
\begin{align}\label{eqinabla}
i \circ \nabla_0 = \nabla \circ (i \otimes i)
\end{align}
In order to show that $B_0$ is an $H$-module algebra, we have to show:
\begin{align*}
ad_0 \circ (H\otimes \nabla_0) =
\gbeg{4}{6}
\got{2}{H}\got{1}{B_0}\got{1}{B_0}\gnl
\gcmu\gcl{1}\gcl{1}\gnl
\gcl{1}\gbr\gcl{1}\gnl
\gnotc{ad_0}\glmpt\grmptb\gnotc{ad_0}\glmpt\grmptb\gnl
\gvac{1}\gwmu{3}\gnl
\gvac{2}\gob{1}{B_0}
\gend
\end{align*}
Now, the right hand side equals
\begin{eqnarray*}
&&\nabla_0 \circ (ad_0 \otimes ad_0) \circ (H \otimes \phi_{H,B_0} \otimes B_0) \circ (\Delta \otimes B_0 \otimes B_0)	\\
&\overset{def. \ \nabla_0}=&p \circ \nabla \circ (i\otimes i)\circ (ad_0 \otimes ad_0) \circ (H \otimes \phi_{H,B_0} \otimes B_0) \circ (\Delta \otimes B_0 \otimes B_0)	\\
&\overset{\eqref{eqiad}}=&p \circ \nabla \circ (ad \otimes ad)\circ (H \otimes i\otimes H \otimes i)\circ  (H \otimes \phi_{H,B_0} \otimes B_0) \circ (\Delta \otimes B_0 \otimes B_0)	\\
&\overset{nat.}=&p \circ \nabla \circ (ad \otimes ad)\circ (H \otimes \phi_{H,B} \otimes B)\circ (H \otimes H\otimes i \otimes i) \circ (\Delta \otimes B_0 \otimes B_0)	\\
&\overset{nat.}=&p \circ \nabla \circ (ad \otimes ad)\circ (H \otimes \phi_{H,B} \otimes B)\circ (\Delta \otimes B \otimes B) \circ (H \otimes i \otimes i)	\\
&\overset{\eqref{eqbmodalg}}=&p \circ ad \circ (H \otimes \nabla) \circ (H \otimes i \otimes i)	\\
&\overset{\eqref{eqinabla}}=&p \circ ad \circ (H \otimes i) \circ (H \otimes \nabla_0)	\\
&\overset{\eqref{eqiad}}=&p \circ i \circ ad_0 \circ (H \otimes \nabla_0)	\\
&\overset{\eqref{eqip}}=&ad_0 \circ (H \otimes \nabla_0)
\end{eqnarray*}
The verification that $\eta_{B_0}$ is left $H$-linear is left to the reader. Finally one can verify that $\omega : B_0 \# H \rightarrow B$,
\begin{align*}
\omega &=
\gbeg{2}{4}
\got{1}{B_0}\got{1}{H}\gnl
\gbmp{i}\gbmp{v}\gnl
\gmu\gnl
\gob{2}{B}
\gend
\intertext{is a right $H$-colinear algebra isomorphism, where}
\omega\inv &=
\gbeg{2}{4}
\got{1}{B}\gnl
\grcm\gnl
\gbmp{p}\gcl{1}\gnl
\gob{1}{B_0}\gob{1}{H}
\gend
\end{align*}
First, $\omega$ is right $H$-colinear since
\begin{align}\label{omegarightcolinear}
\gbeg{3}{5}
\got{1}{B_0}\gvac{1}\got{1}{H}\gnl
\gbmp{i}\gvac{1}\gbmp{v}\gnl
\gwmu{3}\gnl
\gvac{1}\grcm\gnl
\gvac{1}\gob{1}{B}\gob{1}{H}
\gend
\overset{\eqref{eqrightcomodulealgebra}}=
\gbeg{4}{6}
\got{1}{B_0}\gvac{1}\got{1}{H}\gnl
\gbmp{i}\gvac{1}\gbmp{v}\gnl
\grcm\grcm\gnl
\gcl{1}\gbr\gcl{1}\gnl
\gmu\gmu\gnl
\gob{2}{B}\gob{2}{H}
\gend
\overset{\eqref{iequalizer}}=
\gbeg{4}{6}
\got{1}{B_0}\gvac{1}\got{2}{H}\gnl
\gbmp{i}\gvac{1}\gcmu\gnl
\gcl{1}\gu{1}\gbmp{v}\gcl{1}\gnl
\gcl{1}\gbr\gcl{1}\gnl
\gmu\gmu\gnl
\gob{2}{B}\gob{2}{H}
\gend
=
\gbeg{3}{5}
\got{1}{B_0}\got{2}{H}\gnl
\gcl{1}\gcmu\gnl
\gbmp{i}\gbmp{v}\gcl{1}\gnl
\gmu\gcl{1}\gnl
\gob{2}{B}\gob{1}{H}
\gend
\end{align}
Using \eqref{omegarightcolinear}, we see that we have:
\begin{align*}
&\omega\inv\circ \omega
= 
\gbeg{3}{6}
\got{1}{B_0}\gvac{1}\got{1}{H}\gnl
\gbmp{i}\gvac{1}\gbmp{v}\gnl
\gwmu{3}\gnl
\gvac{1}\grcm\gnl
\gvac{1}\gbmp{p}\gcl{1}\gnl
\gvac{1}\gob{1}{B_0}\gob{1}{H}
\gend
\overset{\eqref{omegarightcolinear}}=
\gbeg{4}{6}
\got{1}{B_0}\gvac{1}\got{2}{H}\gnl
\gcl{1}\gvac{1}\gcmu\gnl
\gbmp{i}\gvac{1}\gbmp{v}\gcl{1}\gnl
\gwmu{3}\gcl{1}\gnl
\gvac{1}\gbmp{p}\gvac{1}\gcl{1}\gnl
\gvac{1}\gob{1}{B_0}\gvac{1}\gob{1}{H}
\gend
\overset{\eqref{pcoequalizer}}=
\gbeg{3}{6}
\got{1}{B_0}\got{2}{H}\gnl
\gcl{1}\gcmu\gnl
\gbmp{i}\gcl{1}\gcl{1}\gnl
\gbmp{p}\gcu{1}\gcl{1}\gnl
\gcl{1}\gvac{1}\gcl{1}\gnl
\gob{1}{B_0}\gvac{1}\gob{1}{H}
\gend
= id_{B_0 \# H}
\end{align*}
Likewise:
\begin{align*}
&\omega \circ \omega\inv
=
\gbeg{2}{6}
\got{1}{B}\gnl
\grcm\gnl
\gbmp{p}\gcl{1}\gnl
\gbmp{i}\gbmp{v}\gnl
\gmu\gnl
\gob{2}{B}
\gend
\overset{\eqref{eqip}}=
\gbeg{2}{5}
\got{1}{B}\gnl
\grcm\gnl
\gbmp{E}\gbmp{v}\gnl
\gmu\gnl
\gob{2}{B}
\gend
\overset{\eqref{defe}}=
\gbeg{3}{9}
\got{1}{B}\gnl
\grcm\gnl
\gcl{1}\gcn{1}{1}{1}{3}\gnl
\grcm\gcl{1}\gnl
\gcl{1}\gmp{+}\gcl{1}\gnl
\gcl{1}\gbmp{v}\gbmp{v}\gnl
\gmu\gcl{1}\gnl
\gwmuh{3}{2}{5}\gnl
\gvac{1}\gob{1}{B}
\gend
\overset{asso.}{=}
\gbeg{3}{9}
\got{1}{B}\gnl
\grcm\gnl
\gcl{1}\gcn{1}{1}{1}{2}\gnl
\gcl{1}\gcmu\gnl
\gcl{1}\gmp{+}\gcl{1}\gnl
\gcl{1}\gbmp{v}\gbmp{v}\gnl
\gcl{1}\gmu\gnl
\gwmuh{3}{1}{4}\gnl
\gvac{1}\gob{1}{B}
\gend
\overset{\eqref{eqantipode}}=
\gbeg{2}{7}
\got{1}{B}\gnl
\grcm\gnl
\gcl{1}\gcu{1}\gnl
\gcl{1}\gu{1}\gnl
\gcl{1}\gbmp{v}\gnl
\gmu\gnl
\gob{2}{B}
\gend
=
id_B
\end{align*}
Observe that we have:
\begin{align}\label{eqvi}
\gbeg{3}{6}
\got{1}{H}\gvac{1}\got{1}{B_0}\gnl
\gbmp{v}\gvac{1}\gbmp{i}\gnl
\gwmu{3}\gnl
\gvac{1}\gbmp{p}\gnl
\gvac{1}\gbmp{i}\gnl
\gvac{1}\gob{1}{B}
\gend
\overset{\eqref{eqip}}
=
\gbeg{3}{8}
\got{1}{H}\gvac{1}\got{1}{B_0}\gnl
\gbmp{v}\gvac{1}\gbmp{i}\gnl
\gwmu{3}\gnl
\gvac{1}\grcm\gnl
\gvac{1}\gcl{1}\gmp{+}\gnl
\gvac{1}\gcl{1}\gbmp{v}\gnl
\gvac{1}\gmu\gnl
\gvac{1}\gob{2}{B}
\gend
\overset{\eqref{eqrightcomodulealgebra}}=
\gbeg{4}{10}
\got{1}{H}\gvac{1}\got{1}{B_0}\gnl
\gbmp{v}\gvac{1}\gbmp{i}\gnl
\grcm\grcm\gnl
\gcl{1}\gbr\gcl{1}\gnl
\gmu\gmu\gnl
\gcn{2}{1}{2}{2}\gcn{2}{1}{2}{3}\gnl
\gcn{1}{1}{2}{2}\gvac{2}\gmp{+}\gnl
\gcn{1}{1}{2}{2}\gvac{2}\gbmp{v}\gnl
\gwmuh{4}{2}{7}\gnl
\gob{4}{B}
\gend
\overset{\eqref{iequalizer}}=
\gbeg{4}{10}
\got{2}{H}\got{1}{B_0}\gnl
\gcmu\gbmp{i}\gnl
\gbmp{v}\gcl{1}\gcl{1}\gu{1}\gnl
\gcl{1}\gbr\gcl{1}\gnl
\gmu\gmu\gnl
\gcn{2}{1}{2}{2}\gcn{2}{1}{2}{3}\gnl
\gcn{1}{1}{2}{2}\gvac{2}\gmp{+}\gnl
\gcn{1}{1}{2}{2}\gvac{2}\gbmp{v}\gnl
\gwmuh{4}{2}{7}\gnl
\gob{4}{B}
\gend
=
\gbeg{3}{8}
\got{2}{H}\got{1}{B_0}\gnl
\gcmu\gbmp{i}\gnl
\gbmp{v}\gcl{1}\gcl{1}\gnl
\gcl{1}\gbr\gnl
\gmu\gmp{+}\gnl
\gcn{1}{1}{2}{2}\gvac{1}\gbmp{v}\gnl
\gwmuh{3}{2}{5}\gnl
\gob{3}{B}
\gend
\end{align}
Finally, $\omega$ is an algebra morphism since:
\begin{align*}
&
\gbeg{5}{8}
\got{1}{B_0}\got{2	}{H}\got{1}{B_0}\got{2}{H}\gnl
\gcl{1}\gcmu\gcl{1}\gcn{1}{1}{2}{2}\gnl
\gcl{1}\gcl{1}\gbr\gcn{1}{1}{2}{2}\gnl
\gcl{1}\gnotc{ad_0}\glmpt\grmptb\gcl{1}\gcn{1}{1}{2}{2}\gnl
\gwmu{3}\gwmuh{3}{1}{4}\gnl
\gvac{1}\gbmp{i}\gvac{2}\gbmp{v}\gnl
\gvac{1}\gwmu{4}\gnl
\gvac{2}\gob{2}{B}
\gend
\overset{\eqref{eqbzero}}{\underset{\eqref{eqinabla}}{=}}
\gbeg{5}{9}
\got{1}{B_0}\got{2	}{H}\got{1}{B_0}\got{2}{H}\gnl
\gcl{1}\gcmu\gbmp{i}\gcn{1}{1}{2}{2}\gnl
\gcl{1}\gcl{1}\gbr\gcn{1}{1}{2}{2}\gnl
\gcl{1}\gnotc{ad}\glmpt\grmptb\gcl{1}\gcn{1}{1}{2}{2}\gnl
\gcl{1}\gvac{1}\gbmp{p}\gcl{1}\gcn{1}{1}{2}{2}\gnl
\gbmp{i}\gvac{1}\gbmp{i}\gwmuh{3}{1}{4}\gnl
\gwmu{3}\gvac{1}\gbmp{v}\gnl
\gvac{1}\gwmu{4}\gnl
\gvac{2}\gob{2}{B}
\gend
\overset{\eqref{eqip}}{\underset{\eqref{eqead}}{=}}
\gbeg{5}{9}
\got{1}{B_0}\got{2	}{H}\got{1}{B_0}\got{2}{H}\gnl
\gcl{1}\gcmu\gbmp{i}\gcn{1}{1}{2}{2}\gnl
\gcl{1}\gbmp{v}\gbr\gcn{1}{1}{2}{2}\gnl
\gcl{1}\gmu\gcl{1}\gcn{1}{1}{2}{2}\gnl
\gcl{1}\gcn{2}{1}{2}{3}\gcl{1}\gcn{1}{1}{2}{2}\gnl
\gbmp{i}\gvac{1}\gbmp{E}\gwmuh{3}{1}{4}\gnl
\gwmu{3}\gvac{1}\gbmp{v}\gnl
\gvac{1}\gwmu{4}\gnl
\gvac{2}\gob{2}{B}
\gend
\overset{\eqref{eqvi}}{\underset{}{=}}
\gbeg{6}{10}
\got{1}{B_0}\gvac{1}\got{1}{H}\gvac{1}\got{1}{B_0}\got{1}{H}\gnl
\gcl{1}\gwcmh{3}{2}{5}\gcl{1}\gcl{1}\gnl
\gcl{1}\gcmu\gbr\gcl{1}\gnl
\gcl{1}\gcl{1}\gbr\gbmp{v}\gbmp{v}\gnl
\gcl{1}\gbmp{v}\gbmp{i}\gmp{+}\gmu\gnl
\gcl{1}\gmu\gbmp{v}\gcn{1}{1}{2}{2}\gnl
\gbmp{i}\gwmuh{3}{2}{5}\gcn{1}{1}{2}{2}\gnl
\gwmu{3}\gvac{1}\gcn{1}{1}{2}{2}\gnl
\gvac{1}\gwmuh{5}{1}{8}\gnl
\gvac{3}\gob{1}{B}
\gend
\\
&\overset{(co)asso.}{\underset{nat.}{=}}
\gbeg{6}{12}
\got{1}{B_0}\got{2}{H}\got{1}{B_0}\gvac{1}\got{1}{H}\gnl
\gcl{1}\gcmu\gcl{1}\gvac{1}\gcl{1}\gnl
\gcl{1}\gcl{1}\gbr\gvac{1}\gcl{1}\gnl
\gcl{1}\gcl{1}\gcl{1}\gcn{1}{1}{1}{2}\gvac{1}\gcl{1}\gnl
\gcl{1}\gcl{1}\gcl{1}\gcmu\gcl{1}\gnl
\gbmp{i}\gbmp{v}\gbmp{i}\gmp{+}\gbmp{v}\gbmp{v}\gnl
\gmu\gcl{1}\gbmp{v}\gcl{1}\gcl{1}\gnl
\gcn{2}{1}{2}{2}\gcl{1}\gmu\gcl{1}\gnl
\gcn{2}{1}{2}{2}\gwmuh{3}{1}{4}\gcl{1}\gnl
\gcn{2}{1}{2}{2}\gvac{1}\gwmuh{3}{1}{5}\gnl
\gwmuh{5}{2}{9}\gnl
\gvac{2}\gob{1}{B}
\gend
\overset{\eqref{eqantipode}}{\underset{}{=}}
\gbeg{5}{10}
\got{1}{B_0}\got{2}{H}\got{1}{B_0}\got{1}{H}\gnl
\gcl{1}\gcmu\gcl{1}\gcl{1}\gnl
\gcl{1}\gcl{1}\gbr\gcl{1}\gnl
\gcl{1}\gcl{1}\gcl{1}\gcu{1}\gcl{1}\gnl
\gbmp{i}\gbmp{v}\gbmp{i}\gu{1}\gbmp{v}\gnl
\gmu\gcl{1}\gbmp{v}\gcl{1}\gnl
\gcn{2}{1}{2}{2}\gmu\gcl{1}\gnl
\gcn{2}{1}{2}{2}\gwmuh{3}{2}{5}\gnl
\gwmuh{4}{2}{7}\gnl
\gvac{1}\gob{2}{B}
\gend
=
\gbeg{4}{5}
\got{1}{B_0}\got{1}{H}\got{1}{B_0}\got{1}{H}\gnl
\gbmp{i}\gbmp{v}\gbmp{i}\gbmp{v}\gnl
\gmu\gmu\gnl
\gwmuh{4}{2}{6}\gnl
\gvac{1}\gob{2}{B}
\gend
\end{align*}
\end{proof}

The following is a mirror-symmetry version of \cite[Theorem 4.3.2]{bespalovdrabant}.
\begin{theorem}\label{thmydccc}
Let $(\C,\otimes,\phi)$ be a braided monoidal category which admits split idempotents and suppose 
$H \in \C$ is a braided Hopf algebra with bijective antipode. The categories $\ydc$ and $\CC$ are 
braided monoidal equivalent. In particular\\
(i) Let $V \in \ydc$, then $V \otimes H \in \CC$ via
$$
\mu^-_{V \otimes H} \,
=
\gbeg{4}{5}
\got{2}{H}\got{2}{V \otimes H}\gnl
\gcmu\gcl{1}\gcl{1}\gnl
\gcl{1}\gbr\gcl{1}\gnl
\glm\gmu\gnl
\gvac{2}\gob{1}{V \; \otimes \; H}
\gend
\quad
\text{and}
\quad
\mu^+_{V \otimes H} \,
=
\gbeg{3}{3}
\got{2}{V \otimes H}\got{1}{H}\gnl
\gcl{1}\gmu\gnl
\gob{3}{V \; \otimes \; H}
\gend
$$
$$
\lambda_{V \otimes H} \,
=
\gbeg{4}{5}
\gvac{1}\got{3}{V \; \otimes \; H}\gnl
\glcm\gcmu\gnl
\gcl{1}\gbr\gcl{1}\gnl
\gmu\gcl{1}\gcl{1}\gnl
\gob{2}{H}\gob{2}{V \otimes H}
\gend
\quad
\text{and}
\quad
\rho_{V \otimes H} \,
=
\gbeg{3}{3}
\got{3}{V \; \otimes \; H}\gnl
\gcl{1}\gcmu\gnl
\gob{2}{V \otimes H}\gob{1}{H}
\gend
$$
(ii) Let $M \in \CC$, then $M_0 \in \ydc$ with $H$-action induced by the adjoint action, similar as in \eqref{eqbzero}, and $H$-coaction inherited from $M$, that is the $H$-coaction is defined by the following relation:
\begin{align}\label{eqinherited}
\gbeg{2}{4}
\gvac{1}\got{1}{M_0}\gnl
\glcm\gnl
\gcl{1}\gbmp{i}\gnl
\gob{1}{H}\gob{1}{M}
\gend
=
\gbeg{2}{4}
\gvac{1}\got{1}{M_0}\gnl
\gvac{1}\gbmp{i}\gnl
\glcm\gnl
\gob{1}{H}\gob{1}{M}
\gend
\end{align}
\end{theorem}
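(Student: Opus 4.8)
The plan is to obtain the equivalence by transporting the cited Bespalov--Drabant result \cite[Theorem 4.3.2]{bespalovdrabant} along a mirror symmetry, rather than re-deriving the long diagrammatic identities from scratch. The first step is to make the mirror precise. Let $\overline{\C}$ denote the reversed braided monoidal category: it has the same objects and morphisms as $\C$, the reversed tensor product $X\,\overline{\otimes}\,Y=Y\otimes X$, and the braiding built from $\phi$; reflecting a string diagram in a vertical axis reinterprets it as a diagram in $\overline{\C}$. If $\C$ admits split idempotents, then so does $\overline{\C}$. Under this reflection an algebra stays an algebra (with opposite multiplication), a coalgebra stays a coalgebra, a left module becomes a right module and a left comodule becomes a right comodule; in particular the diagonal left (co)action \eqref{eqdiagonalaction}--\eqref{eqdiagonalcoaction} is carried to the diagonal right (co)action. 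The braided Hopf algebra $H$ is thereby carried to a braided Hopf algebra $\overline{H}$ in $\overline{\C}$, again with bijective antipode.

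Next I would match the two categories across the mirror. Reflecting the Yetter-Drinfeld compatibility \eqref{eqyd} turns it into the compatibility of a right-right Yetter-Drinfeld module over $\overline{H}$ in $\overline{\C}$, so that $\ydc$ is identified, as a braided monoidal category, with the right-handed Yetter-Drinfeld category of \cite{bespalovdrabant}. The Hopf-bimodule category $\CC$ is essentially self-mirror: an object that is a left/right $H$-module and a left/right $H$-comodule reflects to a right/left $\overline{H}$-module and a right/left $\overline{H}$-comodule, i.e. again an $\overline{H}$-Hopf bimodule in $\overline{\C}$. Applying \cite[Theorem 4.3.2]{bespalovdrabant} to $\overline{H}\in\overline{\C}$ now yields a braided monoidal equivalence between these two categories, and reflecting back gives the desired equivalence $\ydc\simeq\CC$.

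It then remains to check that the explicit structures in (i) and (ii) are exactly the mirror images of those produced by \cite{bespalovdrabant}. For (i), reflecting the right-handed object of \cite{bespalovdrabant} gives $V\otimes H$ with right $H$-action and right $H$-coaction carried by $H$ alone (the formulas $\mu^+_{V\otimes H}$ and $\rho_{V\otimes H}$ in the statement) and with the diagonal-type left action $\mu^-_{V\otimes H}$ and left coaction $\lambda_{V\otimes H}$. For (ii), the idempotent $E$ of \eqref{defe}, the coinvariants object $M_0=\mathrm{Im}\,E$, the adjoint action $ad_0$ of \eqref{eqbzero}, and the inherited coaction \eqref{eqinherited} are precisely the reflections of the invariants, action and coaction appearing in the right-handed theorem; the quasi-inverse functor $M\mapsto M_0$ is well defined because bijectivity of the antipode (used exactly as in Proposition \ref{proprightcomod}) guarantees that $E$ splits.

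The main difficulty is bookkeeping rather than conceptual: one must fix a single consistent reflection convention---which way the tensor product reverses, how $\phi$ and $\phi^{-1}$ are exchanged, and how $S$ and $S^{-1}$ swap roles---and then confirm, square by square, that each structure map displayed in the statement is the mirror of the corresponding map in \cite{bespalovdrabant}. Once this dictionary is fixed, functoriality, the two natural isomorphisms witnessing that the functors are quasi-inverse, and compatibility with the braidings all transfer formally from \cite[Theorem 4.3.2]{bespalovdrabant}, so no further diagram chase is needed.
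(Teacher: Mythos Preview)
Your proposal is correct and is exactly the approach the paper takes: the paper does not give a proof of this theorem at all, but simply states it as ``a mirror-symmetry version of \cite[Theorem 4.3.2]{bespalovdrabant}'' and moves on. Your write-up in fact supplies more detail than the paper does, spelling out the reflection $\C\mapsto\overline{\C}$ and checking that the displayed structures in (i) and (ii) match the mirrored ones from \cite{bespalovdrabant}.
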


We are now able to prove the structure theorem for braided bicomodule algebras.
\begin{theorem}
Let $(\C,\otimes,\phi)$ be a braided monoidal category which admits split idempotents and let $H \in \C$ 
be a braided Hopf algebra with bijective antipode. Assume $H$ is flat. Suppose $B$ is an $H$-bicomodule 
algebra such that there exists an $H$-bicolinear algebra morphism $v : H \rightarrow B$. Let $(B_0,i,p)$ 
be the splitting as in \eqref{eqip}, then $B_0 \in \ydc$ is a Yetter-Drinfeld module algebra. The morphism 
$\omega : B_0 \# H \rightarrow B$ of Proposition \ref{proprightcomod} becomes an isomorphism of 
$H$-bicomodule algebras.
\end{theorem}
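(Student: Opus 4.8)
The plan is to run, inside the braided category $\C$, the same three-step strategy that settled the quasi-Hopf and weak Hopf cases, now using Proposition~\ref{proprightcomod}, Theorem~\ref{thmydccc} and Proposition~\ref{propsmashbicomodule} as the braided analogues of the right-comodule reconstruction, Schauenburg's equivalence and the smash-product construction. First I would promote $B$ to an object of $\CC$. Endow $B$ with the $H$-bimodule structure induced by $v$ as in \eqref{bistwofold}, that is, left action $h\cdot b=v(h)b$ and right action $b\cdot h=bv(h)$. The computation displayed immediately after \eqref{bistwofold} already shows that $\rho_B$ is a morphism of right $H$-modules; the mirror-symmetric computation, using left $H$-colinearity of $v$ together with \eqref{eqleftcomodulealgebra}, shows that $\lambda_B$ is a morphism of left $H$-modules, and the remaining bilinearity relations follow from $v$ being a bicolinear algebra map. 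The bicomodule compatibility is exactly the defining relation of an $H$-bicomodule algebra, so $B\in\CC$.

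Second, I apply the one-sided results. Regarding $B$ as a right $H$-comodule algebra equipped with the $H$-colinear algebra morphism $v$, Proposition~\ref{proprightcomod} yields that $B_0$ is an $H$-module algebra, with adjoint action $ad_0$ as in \eqref{eqbzero}, and that $\omega:B_0\#H\to B$ is an isomorphism of right $H$-comodule algebras. Since moreover $B\in\CC$, Theorem~\ref{thmydccc}(ii) equips $B_0$ with a left $H$-coaction $\lambda_{B_0}$, determined by \eqref{eqinherited}, making $B_0\in\ydc$. This is the step where flatness of $H$ is indispensable: to define $\lambda_{B_0}$ via \eqref{eqinherited} one must know that $\lambda_B\circ i$ factors through $H\otimes i$. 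Since $(B_0,i)$ is the equalizer of $\rho_B$ and $B\otimes\eta_H$ (cf.\ \eqref{iequalizer}) and $H$ is flat, the functor $H\otimes-$ preserves this equalizer, so $H\otimes i$ is again an equalizer; and $\lambda_B\circ i$ equalizes $H\otimes\rho_B$ and $H\otimes(B\otimes\eta_H)$ by the bicomodule relation, hence factors uniquely through $H\otimes i$, defining $\lambda_{B_0}$.

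Third, I upgrade $B_0$ from a Yetter-Drinfeld module to a Yetter-Drinfeld module algebra by checking that $\lambda_{B_0}$ is an algebra map. The multiplication and left coaction of $B_0$ are the restrictions along $i$ of those of $B$: we have $i\circ\nabla_0=\nabla_B\circ(i\otimes i)$ from \eqref{eqinabla} and $(H\otimes i)\circ\lambda_{B_0}=\lambda_B\circ i$ from \eqref{eqinherited}. Since $B$ is a left $H$-comodule algebra, $\lambda_B$ is multiplicative; composing these identities and using naturality of the braiding gives $(H\otimes i)\circ\lambda_{B_0}\circ\nabla_0=(H\otimes i)\circ\nabla_{H\otimes B_0}\circ(\lambda_{B_0}\otimes\lambda_{B_0})$. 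Because $i$ is a split monomorphism, so is $H\otimes i$, and cancelling it yields multiplicativity of $\lambda_{B_0}$; the unit condition follows from $\lambda_B(\eta_B)=\eta_H\otimes\eta_B$ together with $i(\eta_{B_0})=\eta_B$. Thus $B_0$ is a left $H$-comodule algebra, hence a Yetter-Drinfeld module algebra, and Proposition~\ref{propsmashbicomodule} makes $B_0\#H$ an $H$-bicomodule algebra.

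Finally, it remains to see that $\omega$ is also left $H$-colinear, so that it is an isomorphism of $H$-bicomodule algebras. Here I would avoid a recomputation by observing that $\omega=\nabla_B\circ(i\otimes v)$ is precisely the structure isomorphism $B_0\otimes H\to B$ furnished by the equivalence of Theorem~\ref{thmydccc} (both send $b_0\otimes h$ to $i(b_0)\cdot h=i(b_0)v(h)$), and is therefore a morphism in $\CC$; in particular it intertwines the left coactions, where the left coaction on $B_0\otimes H$ is the one from Theorem~\ref{thmydccc}(i). That coaction is described by the very same graphical diagram as $\lambda_{B_0\#H}$ in Proposition~\ref{propsmashbicomodule}, so $\omega$ is left $H$-colinear as a map $B_0\#H\to B$. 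Combined with the right-handed statement from Proposition~\ref{proprightcomod}, this shows $\omega$ is an isomorphism of $H$-bicomodule algebras. The main obstacle I anticipate is the descent of the left coaction to $B_0$ in the second step, which is exactly the place forcing the flatness hypothesis, together with the clean identification of $\omega$ with the equivalence's structure map so that left colinearity is inherited rather than verified by hand.
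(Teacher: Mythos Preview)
Your proof is correct and follows the same architecture as the paper's: lift $B$ to $\CC$, invoke Proposition~\ref{proprightcomod} and Theorem~\ref{thmydccc} to get $B_0$ as a Yetter--Drinfeld module and an $H$-module algebra, check the left comodule-algebra axioms for $B_0$ by restriction along $i$, and finish with left $H$-colinearity of $\omega$.

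Two small points of divergence are worth noting. First, you locate the use of flatness in the \emph{definition} of $\lambda_{B_0}$, but that coaction is already supplied by Theorem~\ref{thmydccc} (which only needs split idempotents); in the paper, flatness is invoked to conclude that $H\otimes i$ is a monomorphism so one may cancel it in the comodule-algebra identity. Amusingly, your own observation that $i$ is a split mono (since $p\circ i=\mathrm{id}$) already forces $H\otimes i$ to be a split mono, so neither argument genuinely requires flatness at that step. Second, for the left $H$-colinearity of $\omega$ the paper performs a short direct graphical computation using \eqref{eqinherited} and the left colinearity of $v$, whereas you obtain it more conceptually by identifying $\omega$ with the counit of the equivalence in Theorem~\ref{thmydccc} and reading off that it is a morphism in $\CC$; both routes are valid and land on the same conclusion.
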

\begin{proof}
First, $B$ becomes an object in $\CC$ via \eqref{bistwofold}. Hence, by Theorem \ref{thmydccc}, $B_0$ is 
an object in $\ydc$. On the other hand, by Proposition \ref{proprightcomod}, we know that $B_0$ is a left 
$H$-module algebra and that $\omega$ is a morphism of right $H$-comodule algebras.  Ergo, it suffices to 
show that $B_0$ is now also a left $H$-comodule algebra and that $\omega$ is left $H$-colinear as well. 
The first statement can be established as follows:
\begin{align*}
\gbeg{3}{5}
\got{1}{B_0}\gvac{1}\got{1}{B_0}\gnl
\gwmu{3}\gnl
\glcm\gnl
\gcl{1}\gbmp{i}\gnl
\gob{1}{H}\gob{1}{B}
\gend
\overset{\eqref{eqinherited}}
=
\gbeg{3}{5}
\got{1}{B_0}\gvac{1}\got{1}{B_0}\gnl
\gwmu{3}\gnl
\gvac{1}\gbmp{i}\gnl
\glcm\gnl
\gob{1}{H}\gob{1}{B}
\gend
\overset{\eqref{eqinabla}}
=
\gbeg{3}{5}
\got{1}{B_0}\gvac{1}\got{1}{B_0}\gnl
\gbmp{i}\gvac{1}\gbmp{i}\gnl
\gwmu{3}\gnl
\glcm\gnl
\gob{1}{H}\gob{1}{B}
\gend
\overset{\eqref{eqleftcomodulealgebra}}
=
\gbeg{4}{6}
\gvac{1}\got{1}{B_0}\gvac{1}\got{1}{B_0}\gnl
\gvac{1}\gbmp{i}\gvac{1}\gbmp{i}\gnl
\glcm\glcm\gnl
\gcl{1}\gbr\gcl{1}\gnl
\gmu\gmu\gnl
\gob{2}{H}\gob{2}{B}
\gend
\overset{\eqref{eqinherited}}
=
\gbeg{4}{6}
\gvac{1}\got{1}{B_0}\gvac{1}\got{1}{B_0}\gnl
\glcm\glcm\gnl
\gcl{1}\gbmp{i}\gcl{1}\gbmp{i}\gnl
\gcl{1}\gbr\gcl{1}\gnl
\gmu\gmu\gnl
\gob{2}{H}\gob{2}{B}
\gend
\overset{\eqref{eqinabla}}
=
\gbeg{4}{6}
\gvac{1}\got{1}{B_0}\gvac{1}\got{1}{B_0}\gnl
\glcm\glcm\gnl
\gcl{1}\gbr\gcl{1}\gnl
\gmu\gmu\gnl\gcn{2}{1}{2}{2}\gnotc{i}\glmp\grmpb\gnl
\gob{2}{H}\gvac{1}\gob{1}{B}
\gend
\end{align*}
Since $H$ is flat, the functor $H \otimes -$ preserves equalizers. Hence $H \otimes i$ is a monomorphism 
and by the above computation we may conclude that $B_0$ is a left $H$-comodule algebra (the fact that 
$\eta_{B_0}$ is $H$-colinear is trivial).
\\
To finish the proof, we verify that $\omega$ is also left $H$-colinear. Note that $B_0 \# H$ has the 
structure of a left $H$-comodule via
$$
\lambda_{B_0 \# H} =
\gbeg{4}{5}
\gvac{1}\got{3}{B_0 \; \# \; H}\gnl
\glcm\gcmu\gnl
\gcl{1}\gbr\gcl{1}\gnl
\gmu\gcl{1}\gcl{1}\gnl
\gob{2}{H}\gob{2}{B_0 \# H}
\gend
$$
as in Proposition \ref{propsmashbicomodule}. Then:
$$
(H \otimes \omega) \circ \lambda_{B_0 \# H} =
\gbeg{4}{6}
\gvac{1}\got{3}{B_0 \; \# \; H}\gnl
\glcm\gcmu\gnl
\gcl{1}\gbr\gcl{1}\gnl
\gmu\gcl{1}\gcl{1}\gnl
\gcn{2}{1}{2}{2}\gnotc{\omega}\glmpt\grmptb\gnl
\gob{2}{H}\gvac{1}\gob{1}{B}
\gend
=
\gbeg{4}{6}
\gvac{1}\got{3}{B_0 \; \# \; H}\gnl
\glcm\gcmu\gnl
\gcl{1}\gbmp{i}\gcl{1}\gbmp{v}\gnl
\gcl{1}\gbr\gcl{1}\gnl
\gmu\gmu\gnl
\gob{2}{H}\gob{2}{B}
\gend
\overset{\eqref{eqinherited}}{ \underset{(*)}{=}}
\gbeg{4}{6}
\gvac{1}\got{3}{B_0 \; \# \; H}\gnl
\gvac{1}\gbmp{i}\gvac{1}\gbmp{v}\gnl
\glcm\glcm\gnl
\gcl{1}\gbr\gcl{1}\gnl
\gmu\gmu\gnl
\gob{2}{H}\gob{2}{B}
\gend
\overset{\eqref{eqleftcomodulealgebra}}{=}
\gbeg{3}{5}
\got{1}{B_0}\gvac{1}\got{1}{H}\gnl
\gbmp{i}\gvac{1}\gbmp{v}\gnl
\gwmu{3}\gnl
\glcm\gnl
\gob{1}{H}\gob{1}{B}
\gend
=\lambda_B \circ \omega
$$
where in $(*)$ we have used the left $H$-colinearity of $v$. This concludes the proof.
\end{proof}


\begin{thebibliography}{99}
\bibitem{ap}
H. Albuquerque, F. Panaite, On quasi-Hopf smash products and twisted tensor products of quasialgebras, 
{\sl Algebr. Represent. Theory} {\bf 12} (2009), 199--234. 

\bibitem{alvarezvilaboa}
J. N. Alonso Alvarez, J. M. Fernandez Vilaboa, Cleft extensions in braided categories,
{\sl Comm. Algebra} {\bf 28} (2000), 3185--3196.

\bibitem{bespalov}
Y. Bespalov, Crossed modules and quantum groups in braided categories, 
{\sl Appl. Categor. Struct.} {\bf 5} (1997), 155--204.

\bibitem{bespalovdrabant} 
Y. Bespalov, B. Drabant, Hopf (bi)-modules and crossed modules in braided monoidal categories, 
{\sl J. Pure Appl. Algebra} {\bf 123} (1998), 105--129. 

\bibitem{bohm}
G.  B\"{o}hm, Doi-Hopf modules over weak Hopf algebras, {\sl Comm. Algebra} 
{\bf 28} (2000), 4687--4698. 

\bibitem{bns}
G.  B\"{o}hm, F. Nill, K. Szlach\'{a}nyi, Weak Hopf algebras I: 
Integral  
theory and $C^*$-structure, {\sl J. Algebra} {\bf 221} (1999), 385-438. 

\bibitem{bpv}
D. Bulacu, F. Panaite, F. Van Oystaeyen, Quasi-Hopf algebra
actions and smash products, {\sl Comm. Algebra} {\bf 28} (2000),
631--651.

\bibitem{cwy}
S. Caenepeel, D. Wang, Y. Yin, Yetter-Drinfeld modules over weak bialgebras, 
{\sl Ann. Univ. Ferrara\;-\;Sez. VII\;-\;Sc. Mat.} {\bf LI} (2005), 69--98. 

\bibitem{Dello}
J. Dello, PhD thesis, work in progress.

\bibitem{d}
V. G. Drinfeld, Quasi-Hopf algebras, {\sl Leningrad Math. J.} {\bf
1} (1990), 1419--1457.

\bibitem{hn1}
F. Hausser, F. Nill, Diagonal crossed products by duals of
quasi-quantum groups, {\sl Rev. Math. Phys.} {\bf 11} (1999),
553--629.

\bibitem{hn}
F. Hausser and F. Nill, Integral theory for quasi-Hopf algebras, 
arXiv:math.QA/9904164.

\bibitem{joyalstreet}
A. Joyal, R. Street, Braided monoidal categories,
{\sl Macquarie Mathematical Reports 860081} (1986).

\bibitem{k}
C. Kassel, "Quantum groups", {\sl Graduate Texts in Mathematics}
{\bf 155}, Springer Verlag, Berlin, 1995.

\bibitem{lyubashenko}
V. Lyubashenko, Tangles and Hopf algebras in braided categories,
{\sl J. Pure Appl. Algebra} {\bf 98} (1995), 245--278.

\bibitem{maclane}
S. Mac Lane, ''Categories for the working mathematician'' (second edition), 
{\sl Graduate Texts in Mathematics}, Springer-Verlag (1971). 

\bibitem{m}
S. Majid, ``Foundations of quantum group theory'', Cambridge Univ.
Press, 1995.

\bibitem{majid}
S. Majid, Quantum double for quasi-Hopf algebras, {\sl Lett. Math.
Phys.} {\bf 45} (1998), 1--9.

\bibitem{majidbraided}
S. Majid, Algebras and Hopf algebras in braided categories, in 
{\sl Advances in Hopf algebras}, Lecture Notes in Pure and Applied Mathematics, Marcel
Dekker, New York (1994), 55--105.

\bibitem{majidcross}
S. Majid, Cross products by braided groups and bosonization,
{\sl J. Algebra} {\bf 163} (1994), 165--190.

\bibitem{nenciu}
A. Nenciu, The center construction for weak Hopf algebras, {\sl Tsukuba J. Math.} 
{\bf 26} (2002), 189--204.  

\bibitem{nik}
D. Nikshych, A duality theorem for quantum groupoids, {\sl Contemp. Math.} {\bf 267} 
(2000), 237--244.

\bibitem{nv}
D. Nikshych, L. Vainerman, Finite quantum groupoids and their applications, 
in: New directions in Hopf algebras, MSRI Publications vol. {\bf 43} (2002), 211--262. 

\bibitem{nsw}
F. Nill, K. Szlach\'{a}nyi, H. -W. Wiesbrock, Weak Hopf algebras and reducible Jones 
inclusions of depth 2. I: From crossed products to Jones towers, 
arXiv:math.QA/9806130. 

\bibitem{pvo}
F. Panaite, F. Van Oystaeyen, A structure theorem for quasi-Hopf comodule algebras, 
{\sl Proc. Amer. Math. Soc.}  {\bf 135} (2007), 1669--1677.

\bibitem{schauen}
P. Schauenburg, Hopf modules and Yetter-Drinfeld modules, {\sl J. Algebra} 
{\bf 169} (1994), 874--890. 

\bibitem{sch} 
P. Schauenburg, Hopf modules and the double of a quasi-Hopf algebra, 
{\sl Trans. Amer. Math. Soc.} {\bf 354} (2002), 3349--3378. 

\bibitem{zhang}
L.-Y. Zhang, The structure theorem of weak comodule algebras, {\sl Comm. Algebra} 
{\bf 38} (2010), 254--260. 

\bibitem{wang}
S.-H. Wang, Cibils-Rosso's theorem for quantum groupoids, {\sl Comm. Algebra} 
{\bf 32} (2004), 3703--3722. 

\bibitem{wangzhu}
S.-H. Wang, H.-X. Zhu, On braided Lie structures of algebras in the categories of 
weak Hopf bimodules, {\sl Algebra Colloq.} {\bf 17} (2010), 685--698.

\bibitem{yin}
Y. Yin, M. Zhang, The structure theorem for weak Hopf algebras, {\sl Adv. Math. (China)} 
{\bf 38} (2009), 707--714.


\end{thebibliography}
\end{document}